\newtheorem{theorem}{Theorem}[section]
\newtheorem{lemma}[theorem]{Lemma}
\theoremstyle{definition}
\newtheorem{definition}[theorem]{Definition}
\newtheorem{remark}[theorem]{Remark}
\newtheorem{example}[theorem]{Example}
\newtheorem{notation}{Notation}
\numberwithin{equation}{section}
\newcommand\numberthis{\addtocounter{equation}{1}\tag{\theequation}}
\newenvironment{acknowledgements} {\begin{abstract}} {\end{abstract}}
\global\long\def\C{\mathbb{C}}
\global\long\def\R{\mathbb{R}}
\global\long\def\Z{\mathbb{Z}}
\global\long\def\F{\mathbb{F}}
\global\long\def\mf#1{\mathfrak{#1}}
\global\long\def\mc#1{\mathcal{#1}}
\global\long\def\im{\mathrm{im}}
\global\long\def\.{,\dots ,}
\global\long\def\Aut{\operatorname{Aut}}
\global\long\def\so{\mathfrak{so}}
\global\long\def\Ad{\mathrm{Ad}}
\global\long\def\ad{\mathrm{ad}}
\global\long\def\su{\mathfrak{su}}
\global\long\def\so{\mathfrak{so}}
\global\long\def\g{\mathfrak{g}}
\global\long\def\h{\mathfrak{h}}
\global\long\def\m{\mathfrak{m}}
\global\long\def\Span{\operatorname{span}}
\global\long\def\End{\operatorname{End}}
\global\long\def\sd{\mathrm{d}}
\global\long\def\<{\langle}
\global\long\def\>{\rangle}
\author{Reinier Storm}
\address{KU Leuven, Department of Mathematics, Celestijnenlaan 200B -- Box 2400, BE-3001 Leuven, Belgium} 
\email{reinier.storm@kuleuven.be}
\keywords{Nearly Kähler manifold, Lagrangian submanifolds, method of moving frames}
\subjclass[2010]{Primary 53C42, Secondary 53D12}
\begin{document}
	
\title{Lagrangian submanifolds of the nearly Kähler full flag manifold $F_{1,2}(\C^3)$}

\begin{abstract}
	In this article the framework created by Cartan to produce local differential invariants for submanifolds of homogeneous spaces is applied to classify all totally geodesic Lagrangian submanifolds and all homogeneous Lagrangian submanifolds of the nearly Kähler manifold of full flags in $\C^3$. 
\end{abstract}
\maketitle
	
\section{Introduction}
Nearly Kähler manifolds are one of the sixteen classes of almost hermitian manifolds, which are classified by Gray and Hervella in \cite{Gray1980}. The most common definition of a nearly Kähler almost complex structure $J$ in the literature is $(\nabla^g_XJ)(X) = 0$ for all vector fields $X$, where $\nabla^g$ denotes the Levi-Civita connection. A nearly Kähler non-Kähler manifold is generally referred to as a strict nearly Kähler manifold.
In \cite{Grunewald1990} a nice alternative characterization is given which states: a $6$-dimensional manifold with a non-parallel Killing spinor is strict nearly Kähler with respect to its natural almost complex structure. Conversely, a strict nearly Kähler manifold admits a non-parallel Killing spinor which induces the almost complex structure.
In \cite{Nagy2002} different types of nearly Kähler manifolds are distinguished and a classification result in terms of these types is proven. Moreover, it is shown in \cite{Nagy2002} that the Gray-Wolf conjecture ``any homogeneous strict nearly Kähler manifold is a $3$-symmetric space with its canonical almost complex structure'' is reduced to the $6$-dimensional case. 
The $6$-dimensional case of the Gray-Wolf conjecture is proven in \cite{Butruille2005}.
The $6$-dimensional homogeneous strict nearly Kähler manifolds are the round sphere $S^6 \cong G_2/SU(3)$ with its invariant almost complex structure, $S^3 \times S^3 = SU(2)\times SU(2)\times SU(2)/SU(2)_\Delta$ with its natural Riemannian $3$-symmetric structure, and the twistor spaces $F_{1,2}(\C^3)$ and $\C P^3$ over $\C P^2$ and $S^4$, respectively, with their natural non-integrable almost complex structures.
The first complete non-homogeneous strict nearly Kähler manifolds are discovered in \cite{Foscolo2017} on $S^3\times S^3$ and $S^6$.

A Lagrangian submanifold of an almost hermitian manifold is a submanifold such that the almost complex structure maps the tangent bundle of the submanifold bijectively to its normal bundle. In particular the submanifold needs to have half the dimension of the ambient space.
Ejiri was the first who noticed that a Lagrangian submanifold of the homogeneous nearly Kähler $S^6$ is automatically minimal and oriented, see \cite{Ejiri1981}. This was later generalized to all strict nearly Kähler manifolds in \cite{Gutowski2003}. Lagrangian submanifolds of the homogeneous nearly Kähler $S^6$ are by now very well studied, see for example \cite{Dillen1987,Dillen1990,Dillen1996, Lotay2011}.

More recently, Lagrangian submanifolds on the nearly Kähler manifold $S^3\times S^3$ were constructed from generalized Killing spinors on $S^3$ by Moroianu and Semmelmann in \cite{Moroianu2014}. In \cite{Dioos2018} all Lagrangian submanifolds with constant sectional curvature are classified by means of a triple of angle functions the authors introduce. This method is further studied in \cite{Bektas2018, Zhang2016}.
In \cite{Bektas2019} Lagrangian submanifolds of $S^3\times S^3$ are constructed from minimal surfaces in $S^3$ and conversely if the projection of a Lagrangian submanifold under a natural projection $S^3\times S^3 \to S^3$ is a $2$-dimensional surface, then this surface is minimal.

\subsection{Results}
The aim of this work is to classify all totally geodesic and all homogeneous Lagrangian submanifolds of the nearly Kähler manifold $(F_{1,2}(\C^3),g_{NK},J_{NK})$ up to the symmetry group of the ambient space. 
In view of Klein's Erlanger program it is natural to consider two submanifolds $\Sigma_1$ and $\Sigma_2$ of a homogeneous space $G/H$ with transformation group $G$ to be congruent when there exists an element $g\in G$ such that $L_g(\Sigma_1) = \Sigma_2$, where $L_g:G/H\to G/H$ denotes the left multiplication by $g$. The transformation group $G$ we use are all isometries of $(F_{1,2}(\C^3),g_{NK})$ which also preserve the nearly Kähler almost complex structure $J_{NK}$.
To achieve our goal we use the framework developed by Élie Cartan as exposed in his book \cite{Cartan1935}, see also \cite{Griffiths1974} or \cite{Ivey2003}.
This essentially yields a complete set of local invariants for these equivalence classes. It also provides necessary and sufficient conditions on the invariants for there to exist a submanifold with the prescribed set of invariants.

To fix our notations and conventions we will start in the preliminaries by recalling Cartan's method of moving frames. Then the homogeneous nearly Kähler structure on the space $F_{1,2}(\C^3)$ of full flags of $\C^3$ is defined. We finish by describing the twistor fibration $\pi:F_{1,2}(\C^3)\to \C P^2$ as a homogeneous fiber bundle.

In \Cref{sec:moving frame} a particular moving frame of Lagrangian submanifolds of $F_{1,2}(\C^3)$ is defined. This is then simplified for the special case that the image of the Lagrangian submanifold under the twistor fibration is $2$-dimensional. In \Cref{sec:second fund form} the second fundamental form is expressed in terms of this moving frame. For this the expression of the canonical connection is used, which is briefly described in \Cref{sec:appendix}.

In \Cref{sec:totally geodesic} all totally geodesic Lagrangian submanifolds of $F_{1,2}(\C^3)$ are classified, see \Cref{thm:totally geodesic lag.}. 
There are only two complete totally geodesic Lagrangian submanifolds of $F_{1,2}(\C^3)$ up to isometries of $F_{1,2}(\C^3)$. They are the real flag manifold $F_{1,2}(\R^3)\subset F_{1,2}(\C^3)$ for which the induced metric has constant sectional curvature and $S^3\subset F_{1,2}(\C^3)$ for which the metric is a Berger metric, see \Cref{ex:homogeneous Lagrangians F12} and \Cref{ex:homogeneous Lagrangians S3}, respectively.

In \Cref{thm:homogeneous Lag} all homogeneous Lagrangian submanifolds of $F_{1,2}(\C^3)$ are classified. Besides the complete totally geodesic Lagrangian submanifolds there is one extra non-totally geodesic homogeneous Lagrangian submanifold. It is diffeomorphic to $\R P^3$ and the induced metric is induced from a Berger metric on $S^3$ through the antipodal map, see \Cref{ex:hom RP3}.

\section{preliminaries}

\subsection{Moving frames}\label{sec:intro moving frames}
Let $G$ be a Lie group with Lie algebra $\g$ and left invariant Maurer-Cartan form $\omega_G$. An important part of Cartan's method of moving frames is the following theorem. For a proof one can also consult \cite{Griffiths1974} or \cite{Ivey2003}.
\begin{theorem}[Cartan]\label{thm:Cartan}
Let $i_1,i_2:\Sigma\to G$ be two smooth maps on a connected manifold $\Sigma$. Then $i_1^*\omega_G = i_2^*\omega_G$ if and only if there exists an element $g\in G$ such that
	\[
	i_1= L_g \circ i_2,
	\]
	where $L_g:G\to G$ denotes the left multiplication by $g$.
	Moreover, given a smooth $\g$-valued $1$-form $\alpha:T\Sigma\to \g$ on  a simply connected manifold $\Sigma$ which satisfies the Maurer-Cartan equation
	\[
	\sd\alpha + \frac{1}{2}[\alpha,\alpha] = 0,
	\]
	then there exists a smooth map $i:\Sigma\to G$ such that $i^*\omega_G = \alpha$.
\end{theorem}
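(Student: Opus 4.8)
The statement splits into a rigidity (uniqueness) half and an existence half, and I would treat them separately.

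\emph{Rigidity.} The implication $i_1 = L_g \circ i_2 \Rightarrow i_1^* \omega_G = i_2^* \omega_G$ is immediate from left-invariance $L_g^* \omega_G = \omega_G$, since then $i_1^* \omega_G = i_2^* L_g^* \omega_G = i_2^* \omega_G$. For the converse I would introduce the pointwise quotient map $F:\Sigma\to G$, $F(p) = i_1(p)\, i_2(p)^{-1}$, whose constancy is exactly what must be shown (its value being the desired $g$). The key computational input is the transformation rule for the Maurer--Cartan form under a pointwise product of maps $a,b:\Sigma\to G$,
\[
(ab)^* \omega_G = \Ad(b^{-1}) \circ a^* \omega_G + b^* \omega_G,
\]
together with its specialization $(a^{-1})^* \omega_G = -\Ad(a)\circ a^* \omega_G$ obtained by setting $ab = e$. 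Applying these with $a = i_1$, $b = i_2^{-1}$ collapses to $F^* \omega_G = \Ad(i_2)\bigl(i_1^* \omega_G - i_2^* \omega_G\bigr) = 0$. Since $\omega_G$ is a pointwise linear isomorphism $T_g G \to \g$, the vanishing of $F^* \omega_G$ forces $\sd F = 0$ everywhere, so $F$ is locally constant; connectedness of $\Sigma$ then makes it globally constant, yielding $g$.

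\emph{Existence.} Here I would invoke the Frobenius theorem on the product $\Sigma \times G$. Writing $\mathrm{pr}_1,\mathrm{pr}_2$ for the projections, consider the $\g$-valued $1$-form $\beta = \mathrm{pr}_1^* \alpha - \mathrm{pr}_2^* \omega_G$ and the distribution $\mc D = \ker\beta$. Because $\mathrm{pr}_2^* \omega_G$ restricts to an isomorphism on each $G$-fibre, $\beta$ has constant rank $\dim G$, so $\mc D$ has rank $\dim\Sigma$, is transverse to the fibres, and projects isomorphically onto $T\Sigma$ under $\mathrm{pr}_1$. Involutivity is verified by computing $\sd\beta$ and reducing modulo $\beta$: the structure equation $\sd\omega_G + \tfrac12[\omega_G,\omega_G] = 0$ on $G$ and the hypothesis $\sd\alpha + \tfrac12[\alpha,\alpha] = 0$ give
\[
\sd\beta = -\tfrac12[\mathrm{pr}_1^*\alpha,\mathrm{pr}_1^*\alpha] + \tfrac12[\mathrm{pr}_2^*\omega_G,\mathrm{pr}_2^*\omega_G],
\]
which vanishes on $\mc D$ once $\mathrm{pr}_1^*\alpha$ and $\mathrm{pr}_2^*\omega_G$ are identified there. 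Frobenius then foliates $\Sigma \times G$ by integral leaves of $\mc D$.

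\emph{Globalization.} Fix $p_0\in\Sigma$ and let $N$ be the leaf through $(p_0,e)$. Restricting $\mathrm{pr}_1$ to $N$ gives a local diffeomorphism $N\to\Sigma$ by the transversality above; the remaining and genuinely delicate step is to promote this to a covering map and then invoke simple connectivity of $\Sigma$ to conclude it is a diffeomorphism, so that $N$ is the graph of a global smooth map $i:\Sigma\to G$. Its graph section $\sigma(p)=(p,i(p))$ lands in $N$, hence $\sigma^*\beta = 0$, and unwinding $\mathrm{pr}_1\circ\sigma = \id_\Sigma$, $\mathrm{pr}_2\circ\sigma = i$ yields precisely $i^*\omega_G = \alpha$. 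I expect the covering-map verification to be the main obstacle: the local leaves are graphs almost for free, but patching them coherently over all of $\Sigma$ requires controlling the holonomy of the foliation, which is exactly where simple connectivity is used. A clean alternative that sidesteps a direct covering-space argument is to produce local solutions $i_U$ from Frobenius, observe that the rigidity half forces any two to agree up to left translation on connected overlaps, and then glue them by a monodromy argument using that $\Sigma$ is simply connected.
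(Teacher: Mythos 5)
The paper does not prove this theorem at all: it is quoted as a known result, with the proof deferred to the cited references (Griffiths; Ivey--Landsberg). Your argument is precisely the standard proof found there, and it is correct. The rigidity half is complete as written: the product rule for the Maurer--Cartan form, the inversion formula, and the identity $F^*\omega_G = \Ad(i_2)\bigl(i_1^*\omega_G - i_2^*\omega_G\bigr)$ are all right, and constancy of $F$ on connected $\Sigma$ follows since $\omega_G$ is a pointwise isomorphism. The existence half via Frobenius on $\Sigma\times G$ with $\beta = \mathrm{pr}_1^*\alpha - \mathrm{pr}_2^*\omega_G$ is also the standard route, and your involutivity computation is correct. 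The one step you leave open --- that $\mathrm{pr}_1$ restricted to the leaf $N$ is a covering map --- is genuinely the crux, and the cleanest way to close it is the path-lifting property: lifting a path $\gamma$ in $\Sigma$ into the leaf amounts to solving $g'(t) = \sd L_{g(t)}\bigl(\alpha(\gamma'(t))\bigr)$, a time-dependent left-invariant ODE on $G$, whose solutions exist for all time (any solution on $[0,t_0)$ extends past $t_0$ by left-translating a solution started near $t_0$). This gives unique path lifting, hence a covering, and simple connectivity of $\Sigma$ finishes the argument; your alternative of gluing local Frobenius solutions using the rigidity half and monodromy over a simply connected base works equally well.
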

In the case of submanifolds of a Lie group this theorem completely solves the congruence problem and the existence problem. If the ambient space is a homogeneous space $G/H$ with $H\neq \{e\}$, then one needs an extra ingredient. 
To keep the exposition simpler we will now only illustrate this procedure for Lagrangian submanifolds.

Let $G/H$ be some Hermitian homogeneous space with an invariant almost complex structure $J$ and compatible metric $g$. Let $\omega_{J} := g(J-,-)$ be the almost symplectic form. 

\begin{definition}
	Let $\dim(G/H) =2n$ and let $\iota:L^n\to G/H$ be the inclusion of a submanifold. The submanifold $\iota(L^n)\subset G/H$ is \emph{Lagrangian} if $\iota^*(\omega_{J}) = 0$.
\end{definition}

\begin{remark}
	The above definition of a Lagrangian submanifold is in the literature also called a totally real submanifold of half the dimension of the ambient space.
\end{remark}
Assume we have a reductive decomposition $\g=\h\oplus \m$, where $\g$ is the Lie algebra of $G$ and $\h$ is the Lie algebra of $H$. The subspace $\m$ is naturally identified with $T_{eH}(G/H)$ by $X \mapsto \left.\frac{d}{dt}\right|_{t=0} e^{tX}\cdot H$ for $X\in \m$.
The group $G$ is interpreted as a reduction of the unitary frame bundle $G\subset \mathcal{F}_{U}(G/H)$. This inclusion is defined by $g\mapsto \sd L_g$, where $\sd L_g:\m\cong T_{eH}(G/H)\to T_{gH}(G/H)$ is a unitary framing of $T_{gH}(G/H)$.
By a (local) moving frame along $L^n$ we will always mean a (local) section of $\iota^*\mathcal{F}_{U}(G/H)$ which takes values in $G$, i.e. a function $\tilde{\iota}:L^n\to G$ such that the diagram
\[
\xymatrix{
	& G\ar[d]^\pi\\
	L^n\ar[r]^\iota \ar[ur]^{\tilde{\iota}} & G/H
}
\]
commutes. From now on a moving frame or a lift is always assumed to be local even when this is not mentioned.
For any two lifts $\tilde{\iota}_1:L^n\to G$ and $\tilde{\iota}_2:L^n\to G$ there exists a function $\psi:L^n\to H$ such that
\[
\tilde{\iota}_1(x)\cdot \psi(x) = \tilde{\iota}_2(x),\quad \forall x\in L^n
\]
holds. 
Let $\omega_i := \tilde{\iota}_i^* \omega_G$ denote the pullbacks of the Maurer-Cartan form $\omega_G$ of $G$ for $i=1,2$. Then
\[
\omega_2 = \Ad(\psi)^{-1}\omega_1 + \omega_G(\sd\psi)
\]
holds. In particular the pullback of the Maurer-Cartan form depends on a choice of lift. Thus in order to use \Cref{thm:Cartan} to decide whether two submanifolds of $G/H$ are congruent we need a way to define a unique lift. 
Below we roughly describe the general procedure how to find such a unique lift. This is then applied in \Cref{sec:moving frame} to Lagrangian submanifolds of the nearly Kähler manifold $F_{1,2}(\C^3)$.

First let $\omega_\h$ and $\omega_\m$ define the $\h$ and $\m$ component of the Maurer-Cartan form. 
We can identify the almost symplectic form $\omega_{J}$ with an element of $\Lambda^2 \m$ which is invariant under the action of the isotropy group $H$. This $2$-form in $\Lambda^2\m$ is also denoted by $\omega_J$.
A submanifold $\iota:L^n \to G/H$ is Lagrangian if and only if for a lift $\tilde{\iota}$ the image $(\tilde{\iota}^*\omega_\m)(T_x L^n) \subset \m$ is a linear Lagrangian subspace with respect to $\omega_J\in \Lambda^2\m$ for all $x\in L^n$. For a different lift related to $\tilde{\iota}$ by $\psi:L^n\to H$ this linear Lagrangian subspace for $x\in L$ is given by $\Ad(\psi(x))^{-1}\cdot (\tilde{\iota}^*\omega_\m)(T_x L^n)$. Remember that $H\subset U(\m)$ and thus $H$ acts on the space of linear Lagrangian subspaces of $\m$ which is itself a homogeneous space given by $\mathrm{Lag}(\m):=U(\m)/SO(\m)$. 
For our purpose it suffices to find a smallest dimensional parametrized submanifold $S\subset \mathrm{Lag}(\m)$ such that $H\cdot S = \mathrm{Lag}(\m)$.
The submanifold $S$ is stratified into the different orbit types of the $H$-action.
In Cartan's method of moving frames one demands that the principal orbit type $S_0\subset S$ is a slice for the $H$-action. The submanifold $S$ presented in \Cref{sec:moving frame} has this property.
For a point $\mf l\in S_0$ the stabilizer subgroup $H_0\subset H$ of $\mf l$ preserves $S_0$ by assumption. If $H_0 = \{e\}$, then we have found a unique moving frame for Lagrangian submanifolds which satisfies $(\tilde{\iota}^*\omega_\m)(T_xL)\subset S_0$ for all $x\in L$. 
If $H_0\neq \{e\}$, then we decompose $\h = \h_0 \oplus \m_0$, were $\m_0$ is a reductive complement for the $H_0$-action and we can again look for a slice and repeat the same procedure as above for $(\tilde{\iota}^*\omega_{\m_0})(T_x L^n)$. If $(\tilde{\iota}^*\omega_{\m_0})(T_x L^n) = \{0\}$ we can not proceed and no unique lift is found. It is possible that these steps have to be repeated a number of times.
If a unique frame is found, then the congruence problem and equivalence problem for submanifolds are solved by applying \Cref{thm:Cartan} in the unique moving frame.
For the singular strata the stabilizer subgroup in $H$ is larger. The above procedure can in principle also be followed in this case.

\subsection{The flag manifold \texorpdfstring{$F_{1,2}(\C^3)$}{F(1,2)}\label{sec:the flag manifold}}
We will start by describing a naturally reductive structure together with the nearly Kähler structure on $\F_{1,2}(\C^3)$ which can be represented as the homogeneous space $SU(3)/U(1)^2$. We pick the following basis of $\su(3)$: 
\[
\begin{array}{c c c c}
h_1 := \begin{pmatrix}
-I & 0 & 0\\
0 & 0 & 0\\
0 & 0 & I
\end{pmatrix}, & h_2 := \begin{pmatrix}
\frac{I}{\sqrt{3}} & 0 & 0\\
0 & \frac{-2I}{\sqrt{3}} & 0\\
0 & 0 & \frac{I}{\sqrt{3}}
\end{pmatrix},& m_1 := \begin{pmatrix}
0 & -1 & 0\\
1 & 0 & 0\\
0 & 0 & 0
\end{pmatrix},& m_2 := \begin{pmatrix}
0 & 0 & 0\\
0 & 0 & -1\\
0 & 1 & 0
\end{pmatrix},\\
m_3 :=\begin{pmatrix}
0 & 0 & -1\\
0 & 0 & 0\\
1 & 0 & 0
\end{pmatrix}, & m_4 := \begin{pmatrix}
0 & I & 0\\
I & 0 & 0\\
0 & 0 & 0
\end{pmatrix}, & m_5 :=\begin{pmatrix}
0 & 0 & 0\\
0 & 0 & I\\
0 & I & 0
\end{pmatrix}, & m_6 :=\begin{pmatrix}
0 & 0 & I\\
0 & 0 & 0\\
I & 0 & 0
\end{pmatrix}.
\end{array}
\]
This basis is orthonormal with respect to $\frac{-1}{12}B_{\su(3)}$, where $B_{\su(3)}$ is the Killing form of $\su(3)$. Let $\h:=\Span\{h_1,h_2\}$ and $\m:=\Span\{m_1,m_2,m_3,m_4,m_5,m_6\}$. The decomposition $\su(3)=\h\oplus \m$ defines a naturally reductive structure, where $m_1,\dots,m_6$ form an orthonormal basis for the naturally reductive metric $g$. Let $\m_1 = \mbox{span}\{m_1,m_4\}$, $\m_2 = \mbox{span}\{m_2,m_5\}$ and $\m_3 = \mbox{span}\{m_3,m_6\}$. Then $\m = \m_1 \oplus \m_2\oplus \m_3$ is a decomposition into irreducible $U(1)^2$-modules. 
The curvature $R$ and the torsion $T$ of the naturally reductive connection correspond to the tensors
\[
R(x,y) = - [x,y]_\h  \quad \mathrm{and}\quad T(x,y) = -[x,y]_\m = m_{123} + m_{156} - m_{246} -m_{345},
\]
where $m_{ijk}$ denotes $m_i\wedge m_j\wedge m_k$.
The dual basis of $\g$ will be denoted by $h^1,h^2,m^1,m^2,m^3,m^4,m^5,m^6$. Many of the computations that will be done rely on the exterior derivatives of this dual basis. These are given by:
\begin{align*}
\sd h^1 &= -m^1\wedge m^4 - m^2\wedge m^5 - 2m^3\wedge m^6, \\
\sd h^2 &=  \sqrt{3}m^1\wedge m^4 - \sqrt{3}m^2\wedge m^5, \\
\sd m^1 &= h^1\wedge m^4 -\sqrt{3}h^2\wedge m^4 + m^2 \wedge m^3 + m^5\wedge m^6, \\
\sd m^2 &= h^1\wedge m^5 + \sqrt{3}h^2\wedge m^5 - m^1\wedge m^3 - m^4\wedge m^6, \numberthis \label{eq:ext deriv} \\
\sd m^3 &= 2h^1\wedge m^6 + m^1\wedge m^2 -m^4\wedge m^5, \\
\sd m^4 &= -h^1\wedge m^1 +\sqrt{3}h^2\wedge m^1 + m^2\wedge m^6 + m^3\wedge m^5, \\
\sd m^5 &= -h^1\wedge m^2 -\sqrt{3}h^2 \wedge m^2 - m^1\wedge m^6 - m^3\wedge m^4,  \\
\sd m^6 &= -2h^1\wedge m^3 + m^1\wedge m^5 - m^2\wedge m^4. 
\end{align*}
The invariant $2$-forms in $\Lambda^2\m^*$ under the isotropy representation are spanned by $m^1\wedge m^4,\ m^2\wedge m^5$ and $m^3\wedge m^6$. Thus there are eight $SU(3)$-invariant almost symplectic structures and they are of the form $\pm m^1\wedge m^4 \pm m^2\wedge m^5 \pm m^3\wedge m^6$. Let $\Aut(G,H)$ be the group of automorphisms of $G$ which preserve $H$. The group of all signed permutation matrices in $SO(3)\subset SU(3)$ is contained in $\Aut(G,H)$ through the adjoint representation. Under this action there are only two orbits of almost symplectic forms. They are represented by 
\[
\omega_{NK} := m^1\wedge m^4 + m^2\wedge m^5 - m^3 \wedge m^6,
\]
and
\begin{equation}\label{eq:kahler form}
\omega_K := m^1\wedge m^4 + m^2\wedge m^5 + m^3 \wedge m^6.
\end{equation}
The corresponding almost complex structure $J$ is defined by the equation $g(J(x),y)=\omega(x,y)$ for all $x,y\in \m$. The two almost complex structures $J_{NK}$ and $J_K$ correspond to the nearly Kähler and the Kähler structure on $F_{1,2}(\C^3)$, respectively. 
\begin{remark}
	The flag manifold $F_{1,2}(\C^3)$ also carries a Kähler-Einstein metric.  
	This metric $g_K$ is expressed in terms of the nearly Kähler metric $g$ as
	\[
	g_K = g|_{\m_1\oplus  \m_2} \oplus 2 g|_{\m_3}.
	\]
	A quick computation shows $\omega_K$ is parallel for the Levi-Civita connection corresponding to $g_K$.
	The corresponding symplectic form is the Kirillov-Kostant-Souriau symplectic form on the adjoint orbit $\Ad(SU(3))\cdot h_1 \cong F_{1,2}(\C^3)$.
\end{remark} 
We will focus on the nearly Kähler structure. From now on the almost complex structure $J_{NK}$ will simply be denoted by $J$ and the corresponding almost symplectic form $\omega_{NK}$ by $\omega_J$.
Let us define the complex $1$-forms $z^i := m^i + i J m^i$, so $z^1 = m^1 +im^4$, $z^2 = m^2 +im^5$ and $z^3 = m^3 -im^6$. A complex volume form on $\m$ is given by 
\begin{align*}
\Upsilon &:= i^3 z^1 \wedge  z^2 \wedge z^3 = i^3 (m^1+im^4)\wedge (m^2+im^5)\wedge (m^3-im^6)\\
&= m^{456} - m^{126} - m^{135} + m^{234} - i(m^{123} + m^{156} -m^{246} - m^{345}) = \mathrm{Re}(\Upsilon) + i \mathrm{Im}(\Upsilon).
\end{align*}
The complex volume form $\Upsilon$ is invariant under the isotropy representation or equivalently the holonomy representation of the naturally reductive connection. In other words the triple $(g,\omega_J,\Upsilon)$ equips $F_{1,2}(\C^3)$ with an $SU(3)$-structure.

The de Rham differential of the almost symplectic form is given by
\begin{equation}\label{eq: d w_J}
\sd \omega_J = -3(m^{126} + m^{135} - m^{234} -m^{456}) = 3 \mathrm{Re}(\Upsilon).
\end{equation}
We see that $\sd \omega_J$ is a non-degenerate $3$-form and in particular it is not closed. Furthermore, we have $\sd \mathrm{Im}(\Upsilon) = -2\omega\wedge \omega$.
By the definition of a nearly Kähler manifold in \cite{Foscolo2017} we conclude $(F_{1,2}(\C^3),g,J)$ is a nearly Kähler manifold. Alternatively a simple computation yields $(\nabla^g_X J)(X) = 0$ for all vector fields $X$.
%

\begin{remark}\label{rem:dwj = 0 and Tj=0}
	A Lagrangian submanifold $i:L\to  F_{1,2}(\C^3)$ of the nearly Kähler manifold $F_{1,2}(\C^3)$ is by definition a submanifold for which $i^*(\omega_J) =0$. This implies $ 0 = \sd i^*(\omega_J) = i^*(\sd\omega_J) = i^*(3\mathrm{Re}(\Upsilon))$ and thus $L$ is automatically calibrated by $\mathrm{Im}(\Upsilon)$ in the sense of generalized calibrations as studied in \cite{Gutowski2003}. For a Calabi-Yau manifold, such a submanifold is called a special Lagrangian submanifold. For non-integrable $SU(3)$-structures this terminology is less common in the literature.
	Define $T^J(x,y,z) := T(x,y,J(z)) = g([x,y]_\m,J(z))$. A computation shows the tensor $T^J$ to be totally skew-symmetric and given by $T^J = 3 \mathrm{Re}(\Upsilon)$. Thus the tensor $T^J$ vanishes on every Lagrangian submanifold of $F_{1,2}(\C^3)$. We will see in \Cref{sec:second fund form} that the vanishing of $T^J$ implies that the second fundamental form of a Lagrangian submanifold corresponds to a totally symmetric $3$-tensor on the Lagrangian.
\end{remark}
From \cite{Onishchik1992} we know the isometry group of $F_{1,2}(\C^3) \cong SU(3)/U(1)^2$ is given by $\mathrm{Iso}(F_{1,2}(\C^3)) \cong G\cdot \Aut(G,H)$, where $G=SU(3)$, $H=U(1)^2$, the product is taken in 
\[
\mathrm{Sim}_{G}(\F_{1,2}(\C^3)) := \{f\in \mathrm{Diff}(\F_{1,2}(\C^3)):f(gx) = \phi(g)f(x),~ g\in G,~ x\in \F_{1,2}(\C^3),~ \phi \in \Aut(G)\}
\]
and $\Aut(G,H)$ denotes the group of all automorphisms of $G$ which preserve $H$.
For the study of Lagrangian submanifolds we are interested in the symmetry group of $(F_{1,2}(\C^3),g,J)$, i.e. all isometries which also preserve the almost complex structure. The group of symmetries of $(F_{1,2}(\C^3),g,J)$ is given by $\mathrm{Inn}(G,H)\cdot G$, where $\mathrm{Inn}(G,H)\subset \Aut(G,H)$ is the subgroup of all inner automorphisms of $G$ which preserve $H$. 
Let $D\subset SO(3)$ denote the group of signed permutation matrices inside $SO(3)$.
The isotropy subgroup of the symmetry group is given by $D\cdot U(1)^2$.
Let $\rho:D\cdot U(1)^2\to SO(\m)$ denote the isotropy representation for the symmetry group of $(\F_{1,2}(\C^3,g,J)$.
An element of $U(1)^2$ is of the form
\[
A_{x,y} = \begin{pmatrix}
e^{i(y -x)} & 0 & 0\\
0 & e^{-2iy} & 0\\
0 & 0 & e^{i(x+y)}\\
\end{pmatrix}\in SU(3).
\]
In the basis $(m_1,m_2,m_3,m_4,m_5,m_6)$ we have
\begin{equation}\label{eq:iso rep}
\rho(A_{x,y}) = \begin{pmatrix}
\cos(x-3y) & 0 & 0 & -\sin(x-3y) & 0 & 0\\
0 & \cos(x+3y) & 0 & 0 & -\sin(x+3y) & 0\\
0 & 0 & \cos(2x) & 0 & 0 & -\sin(2x)\\
\sin(x-3y) & 0 & 0 & \cos(x-3y) & 0 & 0\\
0 & \sin(x+3y) & 0 & 0 & \cos(x+3y) & 0\\
0 & 0 & \sin(2x) & 0 & 0 & \cos(2x)\\
\end{pmatrix}.
\end{equation}
In this basis the isotropy representation of a signed permutation matrix $\sigma\in D$ is given by
\begin{equation}\label{eq:permutation rep.}
\rho(\sigma) = \sigma - J\sigma J,
\end{equation}
where $\sigma$ is viewed as a permutation matrix on $\mbox{span}\{m_1,m_2,m_3\}$. 
Note that the symmetries of $(F_{1,2}(\C^3),g,J)$ automatically preserve the complex volume form $\Upsilon$.
\begin{remark}
	On a $k$-symmetric space a tensor field $\delta$ of type $(1,1)$ is called \emph{canonical} if its value at the identity coset is a polynomial in $\delta$, where $\delta$ is the symmetry of order $k$. The most well known example of this is the one we are dealing with here. Namely, a $3$-symmetric space and the canonical almost complex structure which is given by $J = \frac{1}{\sqrt{3}}(\delta-\delta^2)$, see \cite{Wolf1968a, Wolf1968}. 
	For the flag manifold $F_{1,2}(\C^3)$ this $3$-symmetric structure $\delta\in \Aut(\su(3))$ is given by
	\[
	\delta = \Ad\left(\exp\left(\frac{2\pi}{3} h_1\right)\right).
	\]
	This satisfies $\delta^3 = 1$, $\ker(\delta-1) = \h$ and $\im(\delta-1) = \m$.
	The eigenvalues of $\delta$ are $1$, $e^{2\pi i/3}$ and $e^{-2\pi i/3}$.
\end{remark}

\subsection{Twistor fibration\label{sec:twistor fibration}}
The nearly Kähler almost complex structure on $F_{1,2}(\C^3)$ is also well known in relation to the twistor fibration
\[
\pi:F_{1,2}(\C^3) \to \C P^2.
\]
We will briefly describe this twistor fibration below in our notation, which is not the conventional approach. Let $(M^4,g)$ be an oriented Riemannian $4$-manifold. The twistor bundle $J(M^4)$ of $M^4$ is defined as the bundle of all pointwise oriented hermitian almost complex structures on $M^4$, i.e. the fiber over a point $x\in M^4$ is given by
\[
J_x(M^4) = \{J\in SO(T_x M^4) : J^2= -1\}.
\]
The fiber is isomorphic to $SO(4)/U(2) \cong S^2$.
Alternatively, the twistor bundle can be defined as the associated bundle of the principal $SO(4)$-frame bundle $\mc{F}_{SO}(M^4)$ by
\begin{equation}
J(M^4) \cong \mc{F}_{SO}(M^4)\times_{SO(4)} SO(4)/U(2).
\end{equation}
Let
\[
TJ(M^4) = T^v J(M^4)\oplus T^h J(M^4)
\]
be the splitting of the tangent bundle into the vertical subbundle and a horizontal complement determined by the Levi-Civita connection of $M^4$. Two natural almost complex structures on $J(M^4)$ are defined by
\[
J^{\pm}_p = J_{S^2}\pm p \in \End(T_p J(M^4)),
\]
where $J_{S^2}$ is the natural almost complex structure on the fiber $S^2$ and $p\in J(M^4)$ is an almost complex structure on $T_{\pi(p)} M^4 \cong T^h_p J(M^4)$. In \cite{Atiyah1978} this twistor bundle is used to study instantons and it is shown that $J^+$ is integrable if and only if  $(M,g)$ is anti-self-dual. The almost complex structure $J^-$ is never integrable.

Now we will look at the map $\pi :F_{1,2}(\C^3)\to \C P^2$ in more detail. Consider the Lie algebra decomposition $\g = \h \oplus \m_1 \oplus \m_2 \oplus \m_3$ introduced in \Cref{sec:the flag manifold}. Note that $\mf u(2) \cong \h \oplus \m_i\subset \su(3)$ is a subalgebra for $i=1,2,3$ and a reductive complement is given by $\m_j \oplus \m_k$, where $\{i,j,k\} = \{1,2,3\}$. Let $\mf k := \h \oplus \m_3$ and let $U(2)\cong K\subset SU(3)$ be the connected subgroup of $SU(3)$ with Lie subalgebra $\mf k$. We obtain an equivariant Riemannian submersion
\[
\pi :F_{1,2}(\C^3) = SU(3)/U(1)^2 \longrightarrow SU(3)/K \cong \C P^2.
\]
The tangent spaces of the fibers of $\pi$ form the distribution $\underline{\m_3} := G\times_H \m_3 \subset G\times_H \m \cong TF_{1,2}(\C^3)$.
There are actually three different maps $\pi_i: F_{1,2}(\C^3) \to \C P^2$, where the tangent distribution of the fibers is $\underline{\m}_i :=G\times_H \m_i$. These three maps are conjugate to each other by an action of the cyclic permutation group $\mathbb{Z}_3\subset SU(3)$, i.e. $\pi_{\sigma(i)} = \pi_i \circ \tilde{\sigma}^{-1}$, where $\sigma$ is a cyclic permutation and $\tilde{\sigma}\in SU(3) \subset \mathrm{Iso}(F_{1,2}(\C^3))$ the corresponding isometry of $F_{1,2}(\C^3)$. Hence, four our purposes it suffices to only consider the map $\pi = \pi_3$.

Consider the base space $\C P^2 \cong SU(3)/K$. The reductive decomposition of $\C P^2$ is given by 
\begin{equation}\label{eq:red decomp cp2}
\su(3) = \mf k \oplus \m_1 \oplus \m_2,
\end{equation}
where the isotropy algebra is given by $\mf k := \h\oplus \m_3$. This reductive decomposition is symmetric, i.e. $[\m_1\oplus \m_2, \m_1 \oplus \m_2] \subset \mf k$. This describes $\C P^2$ as a symmetric space. 
Let $g$ denote the metric on $\m_1 \oplus \m_2$ for which $m_1,m_4,m_2,m_5$ form an orthonormal basis and also denote the corresponding invariant metric on $\C P^2$ by $g$.
The Kähler form of $\C P^2$ corresponds to 
\[
\omega_{\C P^2}  = \frac{1}{\sqrt{3}}\ad(h_2)|_{\m_1\oplus \m_2} = m^1\wedge m^4 - m^2 \wedge m^5\in \Lambda^2(\m_1 \oplus \m_2).
\]
This induces the Fubini-Study metric with constant holomorphic sectional curvature equal to $-4$ on $\C P^2$. Note that the almost complex structure on $\m_1 \oplus \m_2$ induced from the Kähler structure on $\C P^2$ is \emph{not} equal to the almost complex nearly Kähler structure restricted to $\m_1 \oplus \m_2$. 

The restriction of $J_{NK}$ and $J_K$ to $\m_1 \oplus \m_2$ agree. For now denote this restriction by $I\in \End(\m_1\oplus \m_2)\cong \End(T_{eK} \C P^2)$.
Pick a representative $g\in SU(3)$ for a point $gH\in F_{1,2}(\C^3)$. An almost complex structure on $T_{\pi(g)} \C P^2$ is given by
\[
\sd L_g \circ I \circ  \sd L_g^{-1} \in \End(T_{\pi(g)} \C P^2),
\]
which does not depend on the representative $g\in gH$ because $I$ is $H$-invariant.
Another point in the same fiber of $\pi(g)$ is of the form $gkH$ with $k\in K$ and induces the almost complex structure
\[
\sd L_g \circ (\Ad(k)\circ I )\circ  \sd L_g^{-1} \in \End(T_{\pi(g)} \C P^2).
\]
Varying the point in the fiber  clearly produces all skew-symmetric orientation preserving almost complex structures on $T_{\pi(g)} \C P^2$. The distribution $\underline{\m_1}\oplus \underline{\m_2}\subset TF_{1,2}(\C^3)$ is the horizontal distribution of the Levi-Civita connection of $\C P^2$.
Thus the two natural almost complex structures $J^+$ and $J^-$ on $F_{1,2}(\C^3)$ induced from the twistor bundle are described above as the nearly Kähler almost complex structure and the Kähler complex structure, respectively.

\section{Moving frames of Lagrangian submanifolds\label{sec:moving frame}}
In this section the method of moving frames is applied to describe Lagrangian submanifolds of the nearly Kähler full flag manifold $F_{1,2}(\C^3)$. We will start by presenting three homogeneous Lagrangian submanifolds. In \Cref{thm:homogeneous Lag} we prove these three examples exhaust all homogeneous Lagrangian submanifolds.
In the next subsection a moving frame is described for any Lagrangian submanifold. Then we see how this moving frame can be simplified under the extra condition that $\pi(L)\subset \C P^2$ is a $2$-dimensional submanifold. In the last subsection we conclude with some remarks about the differential invariants obtained from Cartan's method of moving frames.

\begin{example}\label{ex:homogeneous Lagrangians F12}
	Consider the standard inclusion $SO(3)\subset SU(3)$. Let 
	\[
	f_{\tt st} = (\C\times \{0\} \times \{0\},~\C\times \C \times \{0\} )
	\]
	denote the `standard flag' in $\C^3$. Note that the isotropy group of the natural action of $SU(3)$ on flags of $\C^3$ is precisely the subgroup $U(1)^2$ with Lie algebra $\h\subset \su(3)$.
	The elements of $SO(3)$ that fix $f_{\tt st}$ are the diagonal matrices. These form a subgroup $\Z_2\times \Z_2\subset SO(3)$. This yields a homogeneous submanifold
	\[
	F_{1,2}(\R^3) \cong SO(3)/\Z_2\times \Z_2 \cong SO(3)\cdot f_{\tt st} \subset F_{1,2}(\C^3),
	\]
	where $F_{1,2}(\R^3)$ denotes the space of full flags on $\R^3$.
	Its tangent space at the origin is given by the projection of $\so(3)\subset \su(3)$ onto $\m$. This projection is equal to $\mbox{span}\{m_1,m_2,m_3\}\subset \m$, which is a Lagrangian subspace. This implies the submanifold is Lagrangian, since it is homogeneous. From now on this submanifold will be denoted by $F_{1,2}(\R^3) \subset F_{1,2}(\C^3)$. Alternatively, we can think of this Lagrangian submanifold as the natural inclusion of $F_{1,2}(\R^3)$ into $F_{1,2}(\C^3)$. 
	The induced metric on $F_{1,2}(\R^3)$ comes from a biinvariant metric on $SO(3)$. Therefore, the induced metric on $F_{1,2}(\R^3)$ has constant sectional curvature. The sectional curvature can easily be computed and is equal to $-\frac{1}{4}$. 
	The image $\pi(F_{1,2}(\R^3))\subset \C P^2$ of the Lagrangian submanifold under the twistor fibration defines a $2$-dimensional homogeneous submanifold of $\C P^2$. In this case this is the standard inclusion of $\R P^2$ into $\C P^2$ corresponding to the standard inclusion of $\R^2$ into $\C^2$.
\end{example}

\begin{example}\label{ex:homogeneous Lagrangians S3}
	Consider the connected subgroup $SU(2)\subset SU(3)$ with its Lie algebra given by 
	\[
	\mathrm{span} \left\{m_1 + m_2,~ -m_4 + m_5,~ m_6 - \sqrt{3}h_2\right\}.
	\]
	Alternatively, we can define $SU(2)$ as the stabilizer group of the vector $\begin{pmatrix}
	1 & 0 & 1\\
	\end{pmatrix}^T\in \C^3$. There are no elements of $SU(2)$ which fix the plane $\C \times \C \times \{0\} \subset \C^3$. Thus we obtain a homogeneous submanifold
	\[
	S^3 \cong SU(2)\cdot f_{\tt st} \subset F_{1,2}(\C^3).
	\]
	We easily see that the tangent space of this submanifold at the standard flag is a linear Lagrangian subspace of $\m$. This implies the submanifold is Lagrangian, since it is homogeneous. From now on this submanifold will be denoted by $S^3\subset F_{1,2}(\C^3)$.
	The induced metric on $S^3\subset F_{1,2}(\C^3)$ is a Berger metric. This metric is obtained from a round sphere with sectional curvature $-1$ by rescaling the metric in the direction of the fundamental vector field corresponding to $m_6 - \sqrt{3}h_2$ by a factor $\frac{1}{4}$. 
	The image $\pi(S^3)\subset \C P^2$ of $S^3$ under the twistor fibration yields a complex submanifold $\C P^1\subset \C P^2$ corresponding to the inclusion $\C \times \{0\}\times \C \subset \C^3$.
\end{example}

\begin{example}\label{ex:hom RP3}
	Consider the subgroup $K\subset SU(3)$ with its Lie algebra given by
	\[
	 \mbox{span}\left\{\sqrt{2}h_1 + \frac{m_1 + m_2}{\sqrt{2}}, \quad \sqrt{2}h_2  +\frac{m_1 - m_2}{\sqrt{6} } + \sqrt{\frac{2}{3}}m_6,\quad \frac{-m_4+m_5}{\sqrt{3}}  +  \frac{1}{\sqrt{3}}m_3\right\}.
	\]
	Then $K$ is conjugate to the standard subgroup $SO(3)\subset SU(3)$, more explicitly $K = O\cdot SO(3) \cdot O^{-1}$, where the matrix $O$ is given by
	\[
	O := \begin{pmatrix}
	\frac{1}{\sqrt{6}} & \frac{\sqrt{2}i}{\sqrt{3}} & \frac{1}{\sqrt{6}}\\
	\frac{1}{\sqrt{2}} & 0 & \frac{-1}{\sqrt{2}}\\
	\frac{-i}{\sqrt{3}} & \frac{-1}{\sqrt{3}} & \frac{-i}{\sqrt{3}}\\
	\end{pmatrix}.
	\]
	As in \Cref{ex:homogeneous Lagrangians F12} and \Cref{ex:homogeneous Lagrangians S3} the submanifold $K\cdot f_{\tt st}$ is a Lagrangian submanifold. The stabilizer subgroup of $f_{\tt st}$ in $K$ is trivial. Thus
	\[
	\R P^3 \cong SO(3) \cong K\cdot f_{\tt st}.
	\]
	In the sequel we will denote this Lagrangian simply by $\R P^3$. The induced curvature on $\R P^3$ is obtained from a metric of constant sectional curvature $-\frac{3}{4}$ and rescaling the metric in the direction of the fundamental vector field corresponding to $\frac{-m_4+m_5}{\sqrt{3}}  +  \frac{1}{\sqrt{3}}m_3$ by a factor $3$. The metric on $\R P^3$ is thus induced from a Berger sphere metric through the antipodal map.
	The image of $\R P^3$ under the twistor fibration defines a $3$-dimensional submanifold $\R P^3\subset \C P^2$. It is easily seen that this is a homogeneous Hopf hypersurface. The principal curvature of the structure vector field is equal to $4\sqrt{2}$.
	The other principal curvatures are $-\sqrt{2}$ and $\frac{1}{\sqrt{2}}$. This isoparametric hypersurface appears in the survey article \cite[p.~260]{Niebergall1997} as a tube over the complex quadric $Q^1\subset \C P^2$ and also as a tube over $\R P^2 \subset \C P^2$. 
\end{example}

\subsection{Moving frame of a generic Lagrangian}
In this section we will follow the procedure described in \Cref{sec:intro moving frames} to find a particular moving frame for any Lagrangian $L\subset F_{1,2}(\C^3)$. First note that the projection of $T_xL$ onto the distribution $\underline{\m}_3 = SU(3)\times_{U(1)^2} \m_3\subset TF_{1,2}(\C^3)$ always has a non-trivial kernel. Let $e_1\in \Gamma(TL)$ be a unit vector in the kernel of this projection. Let
\[
v_1 = \cos(\varphi)m_1 + \sin(\varphi)m_2 \qquad \mathrm{and}\qquad v_2 = \cos(\varphi)m_4 + \sin(\varphi)m_5
\] 
for a certain angle function $\varphi$. First we choose a moving frame such that
\[
\omega_\m(e_1) = \cos(\alpha)v_1 + \sin(\alpha)v_2,
\]
for some angle function $\alpha$.
The action of the subgroup $ H_1\subset H$ with its Lie algebra spanned by $h_1$ acts on $\omega_\m(e_1)$ by rotating the $\alpha$-angle.

Let $(e_1,e_2,e_3)$ be an orthonormal frame of $TL$.
From $\omega_J(e_1,e_2) = 0$ and $\omega_J(e_1,e_3) = 0$ it follows that the projections of $\omega_\m(e_2)$ and $\omega_\m(e_3)$ onto $\m_1 \oplus \m_2$ have to take value in the orthogonal complement of $\mbox{span}\{\omega_\m(e_1),J(\omega_\m(e_1))\}=  \mbox{span}\{v_1,v_2\}$. An orthonormal basis of this orthogonal complement is given by
\[
v'_1 = -\sin(\varphi)m_1 + \cos(\varphi)m_2\qquad \mathrm{and}\qquad v'_2 = -\sin(\varphi)m_4 + \cos(\varphi)m_5.
\]
It follows that 
\begin{align*}
\omega_\m(e_1) &= \cos(\alpha)v_1 + \sin(\alpha)v_2,\\
\omega_\m(e_2) &= a_1 v'_1 + a_2 v'_2 + a_3 w_1,\\
\omega_\m(e_3) &= b_1 v'_1 + b_2 v'_2 + b_3 w_2,
\end{align*}
where $a_i,b_i\in \R$ for $i=1,2,3$ and $(w_1,w_2)$ is an orthonormal basis of $\m_3$. 
Consider the matrix
\[
A:=\begin{pmatrix}
a_1 & a_2\\
b_1 & b_2\\
\end{pmatrix}.
\]
An element $R(s)\in H_1$ acts on $v_1'$ and $v_2'$ by
\[
R(s)v_1' = \cos(s)v'_1 + \sin(s)v'_2 \qquad \mathrm{and}\qquad R(s)v'_2 = -\sin(s)v'_1 + \cos(s) v'_2.
\]
By letting $R(s)$ act on the Lagrangian its induced action on the matrix $A$ is given by $A\mapsto R(s)\cdot A$. Alternatively, we can also choose a different basis on $TL$ by the following substitution 
\[
\begin{pmatrix}
e_2\\
e_3\\
\end{pmatrix}\mapsto \begin{pmatrix}
\hat{e}_2\\
\hat{e}_3\\
\end{pmatrix} = R(t)\cdot \begin{pmatrix}
e_2\\
e_3\\
\end{pmatrix}.
\]
In this new basis the matrix $A$ is given by $A\cdot R(t)$.
Using the polar decomposition of matrices we choose $s$ and $t$ such that in the new moving frame $A$ is diagonal, i.e.
\[
A\mapsto \begin{pmatrix}
\lambda & 0\\
0 & \mu\\
\end{pmatrix},
\]
for some $\lambda,\mu\in \R$. This yields a moving frame of the form
\begin{align*}
\omega_\m(e_1) &= \cos(\alpha)v_1 + \sin(\alpha)v_2,\\
\omega_\m(e_2) &= \lambda v'_1 + a_3w_1,\\
\omega_\m(e_3) &= \mu v'_2 + b_3w_2.
\end{align*}
Here we make some slight abuse of notation, because the orthonormal basis $w_1$, $w_2$ of $\m_3$ and also the angle $\alpha$ may have changed in the process but we still denote them with the same symbol.
Since $1 = \|\omega_\m(e_2)\| = \sqrt{\lambda^2 + a_3^2}$ we have 
\[
\omega_\m(e_2) = \sin(\beta) v_1' + \cos(\beta)w_1,
\]
for some angle $\beta$. The following hold $\omega_J(v'_1,v'_2) = 1$ and $\omega_J(w_1,w_2) =-1$. From this we obtain
\[
0= \omega_J(e_2,e_3) = \lambda\mu - a_3b_3.
\]
This implies that up to a sign we find
\[
\omega_\m(e_3) = \cos(\beta)v_2' + \sin(\beta)w_2.
\]
In total we have obtained a moving frame for any Lagrangian submanifold $L\subset F_{1,2}(\C^3)$ of the following form
\begin{align*}
\omega_\m(e_1) &= \cos(\alpha)(\cos(\varphi)m_1 + \sin(\varphi)m_2) + \sin(\alpha)(\cos(\varphi)m_4 + \sin(\varphi)m_5),\\
\omega_\m(e_2) &= \sin(\beta)(-\sin(\varphi)m_1 + \cos(\varphi)m_2) + \cos(\beta)(\cos(\theta)m_3 + \sin(\theta)m_6),\\
\omega_\m(e_3) &= \cos(\beta)(-\sin(\varphi)m_4 + \cos(\varphi)m_5) + \sin(\beta)(-\sin(\theta)m_3 + \cos(\theta)m_6),
\end{align*}
for some extra angle function $\theta$. Let $L_{\alpha,\beta,\varphi,\theta}\in \mathrm{Lag}(\m)$ denote the above linear Lagrangian. The submanifold $S$ described in \Cref{sec:moving frame} is the image of the map $(\alpha,\beta,\varphi,\theta)\mapsto L_{\alpha,\beta,\varphi,\theta}\in \mathrm{Lag}(\m)$. 
\begin{notation}\label{not:pullback forms}
	Let $\iota:L\to F_{1,2}(\C^3)$ be the inclusion of some Lagrangian submanifold. In the sequel we will only work on some submanifold of $F_{1,2}(\C^3)$. For this reason we will denote the pullback $\tilde{\iota}^*\alpha$ of some differential form $\alpha\in \Omega^\bullet (G)$ by some moving frame $\tilde{\iota}$ simply by $\alpha$ as well.
	The exterior derivative commutes with pullbacks. Thus in this notation the equations \eqref{eq:ext deriv} directly apply to the pulled-back $1$-forms of $h^i$ and $m^j$ as well. 
\end{notation}
Now we compute the integrability condition $ 0 = \sd \omega_J$ which holds for a Lagrangian submanifold. Equation \eqref{eq: d w_J} gives us 
\[
0=-\frac{1}{3}\sd \omega_J = (m^{126} + m^{135} - m^{234} - m^{456})(e_1,e_2,e_3) = \cos(\alpha-\theta).
\]
Thus $\alpha = \theta +\frac{\pi}{2} \mod \pi$ and we can write our moving frame as:
\begin{align}
\omega_\m(e_1) &= -\sin(\theta)v_1 + \cos(\theta)v_2,\notag\\
\omega_\m(e_2) &= \sin(\beta)v'_1 + \cos(\beta)w, \label{eq:generic moving frame}\\
\omega_\m(e_3) &= \cos(\beta)v'_2 + \sin(\beta)w'\notag,
\end{align}
where $w := \cos(\theta)m_3 + \sin(\theta)m_6$ and $w':= -\sin(\theta)m_3 + \cos(\theta)m_6$. The dimension of the parametrized submanifold $S\subset SU(\m)/SO(\m)$ described by \eqref{eq:generic moving frame} is equal to $3 = \dim(SU(\m)/SO(\m)) - \dim(H)$. Thus $S$ has the smallest possible dimension which a slice of the $H$-action can have.

\subsection{Framing for Lagrangians with \texorpdfstring{$\dim(\pi(L^3))=2$}{dim(pi(L3))=2}\label{sec:framing 2-dim in cp2}} 
In this section we will simplify the above framing in case the image of the Lagrangian submanifold under the twistor fibration $\pi : F_{1,2}(\C^3) \to \C P^2$ is a $2$-dimensional surface. This means that the intersection of $T_xL$ with the fiber over $x$ is $1$-dimensional for every $x\in L$. These are the submanifolds of $F_{1,2}(\C^3)$ which are Lagrangian for the nearly Kähler as well as for the Kähler structure.

Let $L\subset F_{1,2}(\C^3)$ be a Lagrangian submanifold such that $\pi(L)\subset \C P^2$ is $2$-dimensional.
Then the projection $\pi_*:T_xL \to T_{\pi(x)}\C P^2$ has a $1$-dimensional kernel for every $x\in L$. Thus for the moving frame along $L$ there is one tangent vector, say $e_3$, contained in the fiber direction, i.e.
\[
\omega_\m(e_3) = -\sin(\theta)m_3 + \cos(\theta)m_6.
\]
The subspace $\m_1\oplus \m_2$ is a complex subspace of $\m$. Therefore, the projection of $T_xL$ onto $\m_1 \oplus \m_2$ is a Lagrangian subspace in $\m_1 \oplus\m_2$ with respect to the restricted almost complex structure.
Suppose the projection of $T_xL$ onto $\m_1$ is surjective. The case when the projection onto $\m_1$ is $1$-dimensional can be treated separately and corresponds to the case $\sin(\varphi)\cos(\varphi)=0$ in \eqref{eq:moving frame dim(pi(L))=2} below. We can pick a frame $(e_1,e_2,e_3)$ of the form
\begin{align*} 
\omega_\m(e_1) &= m_4 + a_1 m_5 + b_1 m_2,\\
\omega_\m(e_2) &= m_1 + a_2 m_5 + b_2 m_2,\\
\omega_\m(e_3) &= -\sin(\theta)m_3 + \cos(\theta)m_6,
\end{align*}
where $a_i,b_i\in \R$ for $i=1,2$. Now we need to know how the isotropy group $H = U(1)\times U(1)$ acts on $\m_1\oplus \m_2$. We pick two connected subgroups with Lie subalgebras spanned by $\sqrt{3}h_1 -h_2$ and $\sqrt{3}h_1 + h_2$, respectively. These are closed $U(1)$-subgroups. We will refer to them as the first $U(1)$-factor and the second $U(1)$-factor, respectively.
The first $U(1)$-factor rotates the $(m_1,m_4)$-plane and fixes the $(m_2,m_5)$-plane. The second $U(1)$-factor rotates the $(m_2,m_5)$-plane an fixes the $(m_1,m_4)$-plane. If the moving frame is changed by a rotation about $\theta_1$ in the $(m_1,m_4)$-plane and a rotation about $\theta_2$ in the $(m_2,m_5)$-plane, i.e. the moving frame is changed by $(R(\theta_1),R(\theta_2))\in H$, and simultaneously a new frame is picked by
\[
\begin{pmatrix}
e_1\\
e_2\\
\end{pmatrix}\mapsto \begin{pmatrix}
\hat{e}_1\\
\hat{e}_2\\
\end{pmatrix} = R(-\theta_1)\cdot \begin{pmatrix}
e_1\\
e_2\\
\end{pmatrix},
\]
then the coefficients $a_1,a_2,b_1$ and $b_2$ change by
\[
\begin{pmatrix}
a_1 & b_1\\
a_2 & b_2\\
\end{pmatrix} \mapsto R(-\theta_1)\begin{pmatrix}
a_1 & b_1\\
a_2 & b_2\\
\end{pmatrix}R(-\theta_2).
\]
In the process also $\theta$ changes, but as we did before we will still denote this changed parameter with the same symbol.
Thus we can diagonalize this matrix and get a moving frame of the form
\begin{align*}
\omega_\m(e_1) &= \cos(\varphi_1) m_4 + \sin(\varphi_1) m_5 ,\\
\omega_\m(e_2) &= \cos(\varphi_2) m_1 + \sin(\varphi_2) m_2,\\
\omega_\m(e_3) &= -\sin(\theta)m_3 + \cos(\theta)m_6,
\end{align*}
for certain angle functions $\varphi_1$ and $\varphi_2$.
Furthermore, we have $ 0 = \omega_J(e_1,e_2) = -\cos(\varphi_1)\cos(\varphi_2) - \sin(\varphi_1)\sin(\varphi_2) = -\cos(\varphi_1-\varphi_2)$.
Hence, $\varphi_2 \equiv \varphi_1 + \frac{\pi}{2} \mod \pi$. The integrability condition $0 = \sd \omega_J$ now implies $\sin(\theta) = 0$. All in all we obtain a moving frame of the form
\begin{align*}\label{eq:moving frame dim(pi(L))=2}
\omega_\m(e_1) &= \cos(\varphi) m_4 + \sin(\varphi) m_5,\\
\omega_\m(e_2) &= -\sin(\varphi) m_1 + \cos(\varphi) m_2,\numberthis\\
\omega_\m(e_3) &= m_6.
\end{align*}
This moving frame is a special case of the frame described in the previous section by putting $\beta = \frac{\pi}{2}$ and  $\theta=0$.

\begin{remark}\label{rem:slice}
	It is pointed out in \Cref{rem:dwj = 0 and Tj=0} that for every Lagrangian submanifold $L\subset F_{1,2}(\C^3)$ the linear subspace $\omega_\m(T_xL)\subset \m$ takes values in the space of special Lagrangian subspaces of $\m$, i.e. $SU(\m)/SO(\m)\subset \mathrm{Lag}(\m)$. 
	The image of $(\beta,\varphi,\theta)\mapsto L_{\theta,\varphi,\beta}$, where $L_{\theta,\varphi,\beta}$ is the linear Lagrangian spanned by \eqref{eq:generic moving frame}, is partitioned into different orbit types for the isotropy representation. 
	
	It is not hard to check that the orbit type of the principal orbits is the identity group $[e]$. Thus up to finding a fundamental domain in $S = \{L_{\theta,\varphi,\beta}\}_{\theta,\varphi,\beta} \subset SU(\m)/SO(\m)$ the angle functions are differential invariants. In this unique moving frame also $x\mapsto B_{\su(3)}(\omega(T_xL),h_i)$ are differential invariants for $i=1,2$, where $B_{\su(3)}$ is the Killing form of $\su(3)$.
	
	The orbit type of the Lagrangian submanifold from \Cref{ex:homogeneous Lagrangians F12} is $[D]$, where $D$ is the group of all signed permutation matrices in $SO(3)$. The isotropy representation of $D$ is described in \eqref{eq:permutation rep.}.

	Using the Lie algebra action of $\h$ on $SU(\m)/SO(\m)$ one can easily conclude that up to congruence the only linear Lagrangian whose orbit type is not-discrete is the tangent space at the identity coset of $S^3\subset F_{1,2}(\C^3)$ from \Cref{ex:homogeneous Lagrangians S3}.
	The identity component of the stabilizer of this linear Lagrangian is given by the $1$-dimensional closed subgroup $\{\exp(th_2):t\in \R\}\subset U(1)^2$. This will be useful in \Cref{thm:homogeneous Lag}.
	
	The orbit type of the linear Lagrangian spanned by the tangent space at the origin of \Cref{ex:hom RP3} is equal to $[\Z_3]$, where $\Z_3$ is generated by the $h\cdot \Ad(\sigma) \cdot h^{-1}\in \Aut(G,H)$, where $h = \exp(\frac{\pi}{2}h_1)$ and the permutation $\sigma$  is given by
	\[
	\sigma = \begin{pmatrix}
	0 & -1 & 0\\
	0 & 0 & 1\\
	-1 & 0 & 0\\
	\end{pmatrix}.
	\]
	Thus the homogeneous Lagrangian submanifolds from \Cref{ex:homogeneous Lagrangians F12}, \Cref{ex:homogeneous Lagrangians S3} and \Cref{ex:hom RP3} are also distinguished by their orbit types.
\end{remark}

\section{Second fundamental form \label{sec:second fund form}}
From the moving frame of \Cref{sec:moving frame} we can compute the second fundamental form of a Lagrangian submanifold, see \Cref{sec:appendix}. For this we will use the naturally reductive connection. In \Cref{rem:dwj = 0 and Tj=0} we saw that the $3$-form $T^J$ defined by $T^J(x,y,z) = T(x,y,J(z))$ vanishes when pulled-back to $L$. Let $\nabla^g$ denote the Levi-Civita connection and $\nabla$ the naturally reductive connection on $F_{1,2}(\C^3)$. Let $(e_1,e_2,e_3)$ denote a local orthonormal frame of $L$.
The components of the second fundamental form are given by
\begin{align*}
h_{ij}^k &= g(\nabla^g_{e_i}e_j , Je_k) = g(\nabla_{e_i}e_j,Je_k)  -\frac{1}{2}T(e_i,e_j,Je_k)=  g(\nabla_{e_i}e_j,Je_k).
\end{align*}
Thus the second fundamental form of $L$ induced from the naturally reductive connection is the same as the one from the Levi-Civita connection. Since $J$ is $\nabla$-parallel the following holds
\[
h_{ij}^k = g(\nabla_{e_i}e_j,Je_k) = -g(e_j,\nabla_{e_i}(Je_k)) = -g(e_j, J\nabla_{e_i}e_k) = g(\nabla_{e_i}e_k,Je_j) = h_{ik}^j.
\]
This combined with the symmetry of $h_{ij}^k$ in the $ij$-indices implies that $h_{ij}^k$ is totally symmetric.

Let $\omega_\h(e_j) = l_{1j}h_1 + l_{2j}h_2$ for $j=1,2,3$ and certain functions $l_{ij}$.
Computing the components of the second fundamental form by using the naturally reductive connection and \Cref{sec:appendix} gives the following
\begin{align*}
h_{k1}^1 &= g(\nabla_{e_k}e_1,Je_1) = e_k(\theta) + l_{1k} + \sqrt{3}l_{2k}(\sin(\varphi)^2 - \cos(\varphi)^2),\\
h_{k2}^2 &= g(\nabla_{e_k}e_2,Je_2) = -e_k(\theta)\cos(\beta)^2 + l_{1k}(\sin(\beta)^2-2\cos(\beta)^2) - \sqrt{3}l_{2k}\sin(\beta)^2(\sin(\varphi)^2-\cos(\varphi)^2),\\
h_{k3}^3 &= g(\nabla_{e_k}e_3,Je_3) = -e_k(\theta)\sin(\beta)^2 + l_{1k}(\cos(\beta)^2-2\sin(\beta)^2)  -\sqrt{3}l_{2k}\cos(\beta)^2(\sin(\varphi)^2-\cos(\varphi)^2).
\end{align*}
We immediately see from these equations that Lagrangians in $F_{1,2}(\C^3)$ are minimal submanifolds, which is a well known result for $6$-dimensional nearly Kähler manifolds, see \cite{Gutowski2003}. The other components are given by
\begin{align*}
h_{11}^2 &= e_1(\varphi)\cos(\theta)\sin(\beta) - 2\sqrt{3}l_{21}\sin(\theta)\sin(\beta)\sin(\varphi)\cos(\varphi),\\
h_{11}^3 &= e_1(\varphi)\sin(\theta)\cos(\beta) + 2 \sqrt{3}l_{21}\cos(\theta)\cos(\beta)\sin(\varphi)\cos(\varphi),\\
h_{22}^1 &= e_2(\varphi)\sin(\beta)\cos(\theta) -2\sqrt{3}l_{22}\sin(\theta)\sin(\beta)\sin(\varphi)\cos(\varphi),\\
h_{33}^1 &= e_3(\varphi)\cos(\beta)\sin(\theta) + 2\sqrt{3}l_{23}\cos(\theta)\cos(\beta)\sin(\varphi)\cos(\varphi),\\
h_{23}^1 &= e_2(\varphi)\cos(\beta)\sin(\theta) + 2\sqrt{3}l_{22}\cos(\theta)\cos(\beta)\sin(\varphi)\cos(\varphi),\\
h_{31}^2 &= e_3(\varphi)\cos(\theta)\sin(\beta) - 2\sqrt{3}l_{23}\sin(\theta)\sin(\beta)\sin(\varphi)\cos(\varphi),\\
h_{12}^3 &= -e_1(\beta),\qquad h_{22}^3 = -e_2(\beta), \qquad h_{33}^2 = e_3(\beta).
\end{align*}
The symmetry of the second fundamental form now implies certain relations between the functions $\theta,\varphi,\beta,l_{ij}$ which will be used below to classify all totally geodesic and all homogeneous Lagrangian submanifolds.

\subsection{Second fundamental form if \texorpdfstring{$\dim(\pi(L)) = 2$}{dim(pi(L))=2}\label{sec:sff dim = 2}} 
The simplified moving frame we found in \Cref{sec:framing 2-dim in cp2} is a special case of the general moving frame by putting $\beta = \frac{\pi}{2}$ and  $\theta=0$. Therefore, we can simply read off the second fundamental form from the formulas above by substituting $\beta = \frac{\pi}{2}$ and $\theta=0$. This gives:
\begin{align*}
h_{k1}^1 &= g(\nabla_{e_k} e_1,Je_1) = l_{1k} + \sqrt{3}l_{2k}(\sin(\varphi)^2-\cos(\varphi)^2),\\
h_{k2}^2 &= g(\nabla_{e_k} e_2,Je_2) = l_{1k} - \sqrt{3}l_{2k}(\sin(\varphi)^2-\cos(\varphi)^2),\\
h_{k3}^3 &= g(\nabla_{e_k}e_3,Je_3) =-2l_{1k},
\end{align*}
\[
\begin{array}{l l l}
h_{11}^2 =  e_1(\varphi), & h_{22}^3 = 0, & h_{11}^3 = 0,\\
h_{33}^2 = 0, & h_{22}^1 = e_2(\varphi), & h_{33}^1 = 0,\\
h_{12}^3 = 0, & h_{23}^1 =0, & h_{31}^2 = e_3(\varphi). 
\end{array}
\]
The following short computations simplify this case even further.
The above equations immediately imply $l_{11} = l_{12} = 0$.
Thus the $1$-form $h^1$ simplifies to $h^1 = l_{13} e^3 = l_{13} m^6$. The exterior derivative of this equation yields
\begin{align*}
0&=\sd h^1 - \sd l_{13}\wedge m^6 - l_{13}\sd m^6 \\
&= -m^1\wedge m^4 - m^2\wedge m^5 - 2m^3\wedge m^6 - \sd l_{13}\wedge m^6 -l_{13}(2h^1\wedge m^3 + m^1\wedge m^5 - m^2 \wedge m^4)\\
&= - m^1\wedge m^4 - m^2\wedge m^5-l_{13}m^1\wedge m^5 + l_{13} m^2\wedge m^4 - \sd l_{13}\wedge m^6 \\
&= -l_{13}m^1\wedge m^5 + l_{13} m^2\wedge m^4 - \sd l_{13}\wedge m^6.
\end{align*}
Contracting this with $e_1\wedge e_2$ gives
\[
0 = -l_{13}\cos(\varphi)^2 - l_{13}\sin(\varphi)^2 = - l_{13}.
\]
We conclude $h^1=0$ holds. Furthermore, we have
\[
0 = h_{22}^3 = h_{32}^2 = -\sqrt{3}l_{23}(\sin(\varphi)^2 - \cos(\varphi)^2).
\]
Thus  $l_{23} = 0$ or $\sin(\varphi)^2 - \cos(\varphi)^2 = 0$. These simplifications will be used in the next sections.

\section{Classification of all totally geodesic Lagrangians\label{sec:totally geodesic}}
In this section we will classify all totally geodesic Lagrangian submanifolds of the homogeneous nearly Kähler manifold $F_{1,2}(\C^3)$. In \Cref{sec:second fund form} the second fundamental form was computed in the moving frame described in \Cref{sec:moving frame}. For a totally geodesic submanifold the second fundamental form vanishes. This gives a lot of extra equations on the functions $\theta,\varphi,\beta,l_{ij}$.

\begin{theorem}\label{thm:totally geodesic lag.} 
	Every totally geodesic Lagrangian is congruent to an open subset of one of the two homogeneous Lagrangian submanifolds, $F_{1,2}(\R^3)\subset F_{1,2}(\C^3)$ or $S^3\subset F_{1,2}(\C^3)$, which are described in \Cref{ex:homogeneous Lagrangians F12} and \Cref{ex:homogeneous Lagrangians S3}, respectively.
\end{theorem}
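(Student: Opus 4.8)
The plan is to rephrase total geodesy as the vanishing of every component $h_{ij}^k$ of the second fundamental form computed in \Cref{sec:second fund form}, and then to solve the resulting overdetermined system for the angle functions $\theta,\varphi,\beta$ (and their frame derivatives $e_k(\cdot)$) together with the connection components $l_{ij}$, using the Maurer--Cartan structure equations \eqref{eq:ext deriv} as integrability conditions. The goal is to show that all of these functions are forced to be constant and to take one of exactly two admissible sets of values, namely those realising the tangent data of $S^3$ and of $F_{1,2}(\R^3)$. Once that is done, $\iota^*\omega_{SU(3)}$ is a constant $\su(3)$-valued $1$-form in the frame, and \Cref{thm:Cartan} promotes the coincidence of pulled-back Maurer--Cartan forms to a congruence, with locality accounting for the phrase ``open subset''.

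The first reduction is immediate: since $h_{12}^3=-e_1(\beta)$, $h_{22}^3=-e_2(\beta)$ and $h_{33}^2=e_3(\beta)$ all vanish, $\beta$ is constant. Next I would read the three diagonal families $h_{k1}^1=h_{k2}^2=h_{k3}^3=0$. Their sum is the (vanishing) mean curvature, so the content lies in the difference $h_{k2}^2-h_{k3}^3$, which carries a factor $\cos(2\beta)$; away from the degenerate value $\beta\equiv\frac{\pi}{4}\bmod\frac{\pi}{2}$ this forces $l_{1k}=0$ and $e_k(\theta)=-\sqrt{3}\,l_{2k}(\sin^2\varphi-\cos^2\varphi)$ for all $k$. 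In parallel the off-diagonal components group into the pairs $\{h_{11}^2,h_{11}^3\}$, $\{h_{22}^1,h_{23}^1\}$ and $\{h_{33}^1,h_{31}^2\}$, each of which is a $2\times2$ linear system in $e_k(\varphi)$ and $l_{2k}\sin\varphi\cos\varphi$ that becomes non-degenerate as soon as $\sin\beta\neq0$ and $\cos\beta\neq0$; hence in that range $\varphi$ is constant and $l_{2k}\sin\varphi\cos\varphi=0$.

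The crux is to eliminate the generic branch $\sin\beta\cos\beta\neq0$, and here the algebra of the equations $h_{ij}^k=0$ is not enough on its own. I would feed the previous conclusions into the structure equation for $\sd h^1$ in \eqref{eq:ext deriv}: evaluating the pulled-back identity $\sd h^1=-(m^1\wedge m^4+m^2\wedge m^5+2\,m^3\wedge m^6)$ on the frame gives $\sd h^1(e_2,e_3)=-3\sin\beta\cos\beta$, which is nonzero on this branch. But the diagonal analysis has just forced $h^1\equiv0$, hence $\sd h^1\equiv0$, a contradiction. The leftover degenerate value $\cos(2\beta)=0$, where the diagonal equations no longer yield $l_{1k}=0$, has to be excluded by a fuller use of the structure equations for $\sd h^1,\sd h^2$ and the $\sd m^i$. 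I expect this integrability step, rather than the pointwise algebra, to be the main obstacle of the proof.

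What survives is $\sin\beta\cos\beta=0$, i.e. $\beta\equiv0$ or $\beta\equiv\frac{\pi}{2}$. For $\beta\equiv0$ the remaining off-diagonal relations together with the structure equations should pin $\varphi$ and $\theta$ to the values of \Cref{ex:homogeneous Lagrangians S3} and force the $l_{ij}$ to be constant, reproducing the tangent configuration of $S^3$; the case $\beta\equiv\frac{\pi}{2}$ is entirely analogous and lands on $F_{1,2}(\R^3)$ of \Cref{ex:homogeneous Lagrangians F12}. In each case the full pullback $\iota^*\omega_{SU(3)}$ is then a constant $\su(3)$-valued $1$-form, which automatically satisfies the Maurer--Cartan equation, so \Cref{thm:Cartan} identifies the Lagrangian with an open subset of the corresponding homogeneous example, completing the classification.
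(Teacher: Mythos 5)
Your overall strategy --- impose $h_{ij}^k=0$, use the structure equations for $\sd h^1$ and $\sd h^2$ from \eqref{eq:ext deriv} as integrability conditions, kill the generic branch by playing $h^1\equiv 0$ against $\sd h^1(e_2,e_3)=-3\sin\beta\cos\beta\neq 0$, and finish with \Cref{thm:Cartan} --- is essentially the paper's, and that central contradiction is computed correctly. But there are concrete gaps. First, on the branch $\sin\beta\cos\beta\neq 0$ the diagonal analysis does not force $l_{1k}=0$: for fixed $k$ the three equations $h_{k1}^1=h_{k2}^2=h_{k3}^3=0$ sum to zero identically (minimality), and away from $\cos(2\beta)=0$ they only yield $l_{1k}=\sqrt{3}\,l_{2k}(\sin^2\varphi-\cos^2\varphi)$. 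You reach $h^1\equiv 0$ only after the off-diagonal $2\times 2$ systems give $l_{2k}\sin\varphi\cos\varphi=0$ \emph{and} you have excluded $\sin\varphi\cos\varphi=0$. That subcase ($\sin\varphi\cos\varphi=0$ with $\sin\beta\cos\beta\neq 0$) is nowhere addressed in your sketch, and your contradiction is unavailable there since $l_{2k}$, hence $l_{1k}$, need not vanish. The paper disposes of it by observing that $\m_1,\m_2,\m_3$ are permuted by elements of $\Aut(G,H)$, so a $1$-dimensional projection of $T_xL$ onto $\m_1$ or $\m_2$ is congruent to a $1$-dimensional projection onto $\m_3$, i.e.\ to the stratum $\dim(\pi(L))=2$. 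You need this (or an equivalent) reduction.

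Second, the endgame is mis-stated. The values $\beta\equiv 0$ and $\beta\equiv\tfrac{\pi}{2}$ describe the same geometric situation (the fiber direction of $\pi$ is tangent to $L$, so $\dim(\pi(L))=2$), differing only by swapping the roles of $e_2$ and $e_3$; they do not separate the two examples. The dichotomy is governed by $\varphi$: the equations leave exactly $\sin^2\varphi=\cos^2\varphi$ (giving $S^3$) and $\sin\varphi\cos\varphi=0$ (giving $F_{1,2}(\R^3)$), so the claim that each value of $\beta$ ``pins $\varphi$'' to one example would fail if carried out. Finally, in the $S^3$ case $\iota^*\omega_{SU(3)}$ is \emph{not} constant: the linear Lagrangian $\Span\{m_1+m_2,\,-m_4+m_5,\,m_6\}$ has the $1$-dimensional stabilizer $\{\exp(th_2)\}$ in $H$, so the components $l_{2k}$ are not determined by the frame adaptation and remain arbitrary functions. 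A direct appeal to \Cref{thm:Cartan} via constancy of the Maurer--Cartan pullback therefore does not apply there; one needs the paper's argument that $\Span\{h_2,\omega(e_1),\omega(e_2),\omega(e_3)\}\cong\mf{u}(2)$ is a subalgebra, so the lift takes values in $U(2)$ and the image lies in $U(2)\cdot f_{\tt st}=SU(2)\cdot f_{\tt st}$.
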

\begin{proof}
First we consider all totally geodesic Lagrangian submanifolds which satisfy the extra condition $\dim(\pi(L))=2$. Remember, for this case we found a moving frame of the form
\begin{align*}
\omega(e_1) &= l_{11}h_1 + l_{21}h_2 + \cos(\varphi) m_4 + \sin(\varphi) m_5,\\
\omega(e_2) &= l_{12}h_1 + l_{22}h_2 -\sin(\varphi) m_1 + \cos(\varphi) m_2,\\
\omega(e_3) &= l_{13}h_1 + l_{23}h_2 +m_6.
\end{align*}
From the discussion at the end of \Cref{sec:sff dim = 2} we know $h^1 = 0$ and $l_{23}(\sin(\varphi)^2 - \cos(\varphi)^2)=0$. Now we distinguish two cases: $\sin(\varphi)^2 - \cos(\varphi)^2=0 $ and $\sin(\varphi)^2 - \cos(\varphi)^2\neq0 $.

\paragraph{\textbf{Case $\sin(\varphi)^2 - \cos(\varphi)^2=0$:}}
First suppose $\sin(\varphi) = - \cos(\varphi)$. Up to changing an overall sign the moving frame takes the form:
\[
\omega_\m(e_1) = \frac{1}{\sqrt{2}}(m_1 + m_2),\quad \omega_\m(e_2) = \frac{1}{\sqrt{2}}(-m_4 + m_5),\quad \omega_\m(e_3) = m_6.
\]
Note that $\mbox{span}\{\omega_\m(e_1),\omega_\m(e_2),\omega_\m(e_3)\}$ is preserved by $\ad(h_2)$.
Furthermore, we have
\[
\begin{array}{l l l}
[h_2,\omega_\m(e_1)] = \sqrt{3}\omega_\m(e_2), & [h_2,\omega_\m(e_2)] = -\sqrt{3}\omega_\m(e_1),  & [h_2,\omega_\m(e_3)] = 0,
\\ \relax
[\omega_\m(e_1),\omega_\m(e_2)] = -\omega_\m(e_6)  + \sqrt{3}h_2, & [\omega_\m(e_1),\omega_\m(e_3)] = \omega_\m(e_2),  & [\omega_\m(e_2),\omega_\m(e_3)] = -\omega_\m(e_1).
\end{array}
\]
Thus we find a subalgebra
\[
\su(2) \cong \mbox{span}\{e_1,e_2,e_3 - \sqrt{3}h_2\}\subset \mf u(2) \cong \mbox{span}\{h_2,e_1,e_2,e_3\} \subset \su(3).
\]
Consequently, the moving frame $\tilde{\iota}:L\to SU(3)$, which is a lift of the inclusion $\iota:L\to F_{1,2}(\C^3)$, takes values in the subgroup $U(2)\subset SU(3)$ with the above subalgebra $\mbox{span}\{h_2,e_1,e_2,e_3\}$. This means that
\begin{equation}\label{eq:t.g. homogeneous}
\iota(x) = \pi(\tilde{\iota}(x)) \in U(2)\cdot f_{\tt st} = SU(2)\cdot f_{\tt st}, 
\end{equation}
where $f_{\tt st}$ denotes the standard flag defined in \Cref{ex:homogeneous Lagrangians F12}. On the right-hand-side of \eqref{eq:t.g. homogeneous} we have the homogeneous Lagrangian submanifold from \Cref{ex:homogeneous Lagrangians S3}. Since $L$ and $SU(2)$ have the same dimension it follows that $L$ is congruent to an open subset of the homogeneous Lagrangian $S^3\subset F_{1,2}(\C^3)$ from \Cref{ex:homogeneous Lagrangians S3}. 

Suppose $\sin(\varphi) =  \cos(\varphi)$. Up to choosing an overall sign the moving frame now takes the form
\[
\omega_\m(e_1) = \frac{1}{\sqrt{2}}(-m_1 + m_2),\quad \omega_\m(e_2) = \frac{1}{\sqrt{2}}(m_4 + m_5),\quad \omega_\m(e_3) = m_6.
\]
This case is congruent to the previously described Lagrangian after we apply the automorphism
\[
\Ad\begin{pmatrix}
0 & 0 & -1\\
0 & 1 & 0\\
1 & 0 & 0\\
\end{pmatrix} \in \Aut(G,H). 
\]

\paragraph{\textbf{Case $\sin(\varphi)^2-\cos(\varphi)^2\neq 0$:}}
Now the equations
\[
0 = h_{k1}^1 =  l_{1k} + \sqrt{3}l_{2k}(\sin(\varphi)^2-\cos(\varphi)^2) = \sqrt{3}l_{2k}(\sin(\varphi)^2-\cos(\varphi)^2)
\]
for $k=1,2,3$ imply $l_{2k} = 0$ or equivalently the $1$-form $h^2 = l_{21}e^1 + l_{22}e^2 + l_{23}e^3$ vanishes, where $e^i\in \Gamma(T^*L)$ is the dual basis of $e_i$. Taking the exterior derivative of this implies that $\sd h^2 = \sqrt{3}(m^1\wedge m^4 - m^2 \wedge  m^5) =0$. We already mentioned in the beginning of \Cref{sec:framing 2-dim in cp2} that the vectors $\omega_\m(e_1)$ and $\omega_\m(e_2)$ span a Lagrangian subspace in $\m_1 \oplus \m_2$ or equivalently the $2$-form $m^1\wedge m^4 + m^2\wedge m^5$ vanishes. Therefore, we find $m^1\wedge m^4 = 0$ and $m^2\wedge m^5 =0$. This holds if and only if $\cos(\varphi)\sin(\varphi) =0$. Up to choosing the signs of $\omega_\m(e_1)$, $\omega_\m(e_2)$ and $\omega_\m(e_3)$ the moving frame now takes the form
\[
\omega_\m(e_1) = m_4,\quad \omega_\m(e_2) = m_2,\quad \omega_\m(e_3) = m_6
\]
or
\[
\omega_\m(e_1) = m_5,\quad \omega_\m(e_2) = -m_1,\quad \omega_\m(e_3) = m_6.
\]
For both cases $\mbox{span}\{\omega_\m(e_1),\omega_\m(e_2),\omega_\m(e_3)\}$ is conjugate to $\mbox{span}\{m_1,m_2,m_3\}$ by the action of the isotropy representation. Consequently, we find a moving frame such that $\omega(T_x L) = \mbox{span}\{m_1,m_2,m_3\}$ for all $x\in L$. 
Just as before we conclude that the Lagrangian is congruent to an open subset of the homogeneous Lagrangian submanifold $F_{1,2}(\R^3)\subset F_{1,2}(\C^3)$ described in \Cref{ex:homogeneous Lagrangians F12}.

Consider an arbitrary totally geodesic Lagrangian submanifold. If $\sin(\beta)=0$ or $\cos(\beta)=0$, then the projection of $\omega_\m(T_xL)$ onto $\m_3$ is $1$-dimensional and thus we are in the above case. Similarly if $\sin(\varphi)=0$ or $\cos(\varphi) = 0$, then the projection onto $\m_1$ or $\m_2$, respectively, is $1$-dimensional. Since $\m_1,\m_2$ and $\m_3$ are conjugate by an element of $\Aut(G,H)$, see \Cref{sec:twistor fibration}, this also reduces to the above case.
Thus we assume $\sin(\beta),\cos(\beta), \sin(\varphi)$ and $\cos(\varphi)$ to be all non-zero.
From the equations $h_{11}^2 = h_{11}^3 =0$ it follows
\[
0 = -h_{11}^2\frac{\sin(\theta)}{\sin(\beta)} + h_{11}^3\frac{\cos(\theta)}{\cos(\beta)} =2\sqrt{3}l_{21}\sin(\varphi)\cos(\varphi).
\]
We can conclude $l_{21} = 0$. Similarly from $h_{22}^1 = h_{23}^1 = 0$ and $h_{33}^1 = h_{31}^2 =0$ we obtain $l_{22} = 0$ and $l_{23} = 0$, respectively. From this it follows that $0 = \sd h^2 = \sqrt{3}(m^1 \wedge m^4 -m^2 \wedge m^5)$ holds.
Contracting this with $e_1\wedge e_2$ yields $ 0 = 2\sin(\beta)\cos(\theta)\sin(\varphi)\cos(\varphi)$. Thus $\cos(\theta)=0$ and in particular $\theta$ is constant.
Combining this with $0 = h_{k1}^1$ implies $l_{1k} = 0$ for $k=1,2,3$. This implies that the $1$-form $h^1$ vanishes and thus also its exterior derivative vanishes. This gives
\[
0 = \sd h^1 = -m^1\wedge m^4 - m^2\wedge m^5 -2 m^3\wedge  m^6 = -\omega_J - 3m^3\wedge m^6.
\]
Consequently, the $2$-form $m^3\wedge m^6$ vanishes on $L$. This means the projection from $\omega_\m(T_x L)$ onto $\m_3$ is not surjective, which is a contradiction with our assumption that $\sin(\beta)\cos(\beta) \neq 0$.
\end{proof}

\section{Classification of all homogeneous Lagrangians\label{sec:homogeneous lag}}
In this section the homogeneous Lagrangian submanifolds of $F_{1,2}(\C^3)$ are classified. Recall that a submanifold $L\subset F_{1,2}(\C^3)$ is homogeneous when it is the orbit of a subgroup $K$ of the isometry group of $F_{1,2}(\C^3)$.
\begin{theorem}\label{thm:homogeneous Lag}
	A homogeneous Lagrangian submanifold of $F_{1,2}(\C^3)$ is either congruent to a complete totally geodesic Lagrangian submanifold, classified in \Cref{thm:totally geodesic lag.}, or it is congruent to $\R P^3\subset F_{1,2}(\C^3)$ described in \Cref{ex:hom RP3}.
\end{theorem}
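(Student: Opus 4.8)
The plan is to classify homogeneous Lagrangians by studying the orbit types of the associated linear Lagrangian $\omega_\m(T_xL)$ under the isotropy action, exploiting the fact that for a homogeneous submanifold this linear Lagrangian, together with all the differential invariants extracted from the moving frame, must be constant along $L$. In other words, a homogeneous Lagrangian is forced to have constant angle functions $\theta,\varphi,\beta$ and constant connection coefficients $l_{ij}$, so I would first translate homogeneity into the statement that every differential invariant identified in \Cref{rem:slice} is a constant. This immediately reduces the problem to a finite case analysis governed by the orbit-type stratification of the slice $S\subset SU(\m)/SO(\m)$.

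First I would treat the generic (principal) stratum, where the orbit type is the trivial group $[e]$. Here the unique moving frame is rigid, and homogeneity forces $\theta,\varphi,\beta$ constant and $l_{ij}$ constant. I would then feed these constants into the Maurer--Cartan structure equations \eqref{eq:ext deriv} and the expressions for the second fundamental form computed in \Cref{sec:second fund form}: differentiating the constancy relations via $\sd$ and contracting with the frame should force enough of the $l_{ij}$ and angle combinations to vanish that the Lie algebra $\Span\{\omega(e_1),\omega(e_2),\omega(e_3)\}\oplus\h$-span closes up into a fixed subalgebra of $\su(3)$. By \Cref{thm:Cartan} the lift $\tilde\iota$ then lands in the corresponding subgroup, and comparing dimensions identifies $L$ with an orbit. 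The expectation is that the only non-discrete stabilizer case, identified in \Cref{rem:slice} as the tangent space of $S^3$ with stabilizer $\{\exp(th_2)\}$, recovers \Cref{ex:homogeneous Lagrangians S3}, and that the special orbit types $[D]$ and $[\Z_3]$ recover \Cref{ex:homogeneous Lagrangians F12} and \Cref{ex:hom RP3} respectively.

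For the singular strata I would argue directly on the level of subalgebras: a homogeneous $L$ arises as $K\cdot f_{\tt st}$ for a three-dimensional subgroup $K$ whose Lie algebra projects onto a constant Lagrangian subspace of $\m$. The condition that this projection be Lagrangian and that the bracket relations close (so that $K$ is an actual subgroup) is a strong algebraic constraint. I would enumerate the three-dimensional subalgebras $\mf k\subset\su(3)$ whose $\m$-projection is a Lagrangian subspace matching one of the constant frames \eqref{eq:generic moving frame}, using the orbit-type data to organize the search. The cases $\sin(\beta)\cos(\beta)=0$ or $\sin(\varphi)\cos(\varphi)=0$, where the projection to some $\m_i$ is one-dimensional, reduce as in the proof of \Cref{thm:totally geodesic lag.} to the twistor-projection picture and yield $F_{1,2}(\R^3)$ or $S^3$. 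The genuinely new solution should emerge precisely when $\theta,\varphi,\beta$ take the special constant values corresponding to orbit type $[\Z_3]$, producing the conjugated copy $K=O\cdot SO(3)\cdot O^{-1}$ of \Cref{ex:hom RP3}.

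The main obstacle I anticipate is the non-generic case $\sin(\beta)\cos(\beta)\neq 0$ with $\sin(\varphi)\cos(\varphi)\neq 0$, where none of the immediate simplifications from the $\dim(\pi(L))=2$ analysis apply. Here one cannot read off the frame from the $2$-dimensional twistor-image computation, so I would have to work with the full system: impose constancy of $\theta,\varphi,\beta$ and $l_{ij}$, then use the structure equations \eqref{eq:ext deriv} to derive integrability constraints by computing $\sd h^1$, $\sd h^2$, and $\sd\omega_J$ in the constant frame and contracting against $e_i\wedge e_j$. The delicate point is showing that the resulting polynomial system in $(\sin\theta,\cos\theta,\sin\varphi,\cos\varphi,\sin\beta,\cos\beta,l_{ij})$ admits, up to the residual gauge freedom and the action of $\Aut(G,H)$, exactly the one extra solution corresponding to $\R P^3$ and no further sporadic homogeneous examples. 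I expect this to require careful bookkeeping of which invariants are fixed by the remaining stabilizer at each stratum, but the orbit-type classification already recorded in \Cref{rem:slice} should guarantee the list is complete.
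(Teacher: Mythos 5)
Your overall strategy---reduce homogeneity to constancy of the frame data $\theta,\varphi,\beta,l_{ij}$ and then solve the resulting algebraic system using the structure equations and the symmetries of the second fundamental form---is the same route the paper takes (the paper obtains constancy slightly more directly: for $\mf k\cong\su(2)$ the lift can be taken inside $K$ itself, so the $\omega(e_i)$ are literally constant elements of $\g$ and $\Span\{\omega(e_1),\omega(e_2),\omega(e_3)\}$ is a subalgebra; it also treats separately the possibility $\mf k\cong\mf u(2)$, which your sketch only touches implicitly through the non-discrete-stabilizer remark). The genuine gap is that the decisive step is never carried out: in the only hard regime, $\sin\beta\cos\beta\sin\varphi\cos\varphi\neq 0$, you state that you ``expect'' the polynomial system to admit exactly one extra solution up to symmetry. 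That expectation \emph{is} the theorem; nothing in your proposal establishes it. Concretely, the paper resolves this regime by a further case split that your outline does not identify: (i) if $\sin\theta\cos\theta\neq0$, the symmetries $h_{12}^3=0$ and $h_{23}^1=h_{31}^2=0$ give $l_{22}=l_{23}=0$, and then evaluating $\sd h^2=l_{21}\,\sd e^1$ on $e_1\wedge e_2$ in two ways forces $\cos\theta=0$, a contradiction; (ii) if $\sin\theta\cos\theta=0$, one normalizes $\sin\theta=1$ and splits further on $\cos^2\beta-2\sin^2\beta$ and on $\sin^2\varphi-\cos^2\varphi$, with all branches except $\cos^2\beta=2\sin^2\beta$, $\sin^2\varphi=\cos^2\varphi$ collapsing to the totally geodesic case, and that last branch pinning down exactly the subalgebra of \Cref{ex:hom RP3}. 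Without some such elimination your argument shows only that the listed examples are homogeneous, not that they exhaust the homogeneous Lagrangians.

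A second, related problem is your appeal to the orbit-type stratification of \Cref{rem:slice} to ``guarantee the list is complete''. That stratification classifies stabilizers of \emph{linear} Lagrangians in $\m$; it says nothing about which constant frames close up into subalgebras of $\su(3)$, and a priori a homogeneous Lagrangian could sit over a linear Lagrangian of principal (trivial) orbit type. Moreover the $[\Z_3]$ and $[D]$ stabilizers recorded there live in $D\cdot U(1)^2$ rather than in $H=U(1)^2$, so they are not visible to the moving-frame reduction at all. The remark is genuinely used in the paper's proof only once, to dispose of the $\mf u(2)$ case: a $4$-dimensional acting group forces a $1$-dimensional subalgebra of $\h$ preserving $T_{eH}L$, which by \Cref{rem:slice} can only be $\R h_2$ and only for the tangent space of \Cref{ex:homogeneous Lagrangians S3}. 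Everything else must come from the explicit elimination described above, which is the part your proposal leaves open.
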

\begin{proof}
	First note that a homogeneous Lagrangian submanifold is necessarily a complete submanifold. Without loss of generality we assume from now on that the identity coset is contained in $L$.

For a homogeneous Lagrangian submanifold $K/K_0\subset F_{1,2}(\C^3)$ the subalgebra $\mf k\subset \su(3)$ of $K\subset SU(3)$ can be either $\su(2)$ or $\mf u(2)$. First consider the case $\mf k \cong \mf u(2)$. This means the tangent space at the identity coset is a Lagrangian subspace of $\m$ which is preserved by the $1$-dimensional subalgebra $\mf k\cap \h$. 
From \Cref{rem:slice} it follows that if a Lagrangian subspace $\mf l$ is preserved by a $1$-dimensional subalgebra of $\h$, then this subalgebra has to be spanned by $h_2$ and $\mf l$ is conjugate by an element of $\Aut(G,H)$ to $\mbox{span}\{m_1 + m_2, -m_4 + m_5, m_6\}$. Together with $h_2$ this subspace spans a subalgebra of $\su(3)$ which is equal to the subalgebra in \Cref{ex:homogeneous Lagrangians S3}. Thus the homogeneous Lagrangian is congruent to $S^3\subset F_{1,2}(\C^3)$ from \Cref{ex:homogeneous Lagrangians S3}.

The remaining case is when the subalgebra $\mf k\subset \su(3)$ is equal to $\su(2)$. The Lagrangian submanifold is of the form $K/K_0=K\cdot H\subset G/H$, where $K_0\subset K$ is some discrete subgroup. In a small enough neighborhood of the identity coset there exists a lift $\tilde{\iota}:K/K_0\supset U \to K\subset G$ which is a local diffeomorphism. The left invariant vector fields on $\tilde{\iota}(U)$ determine a moving frame such that $\omega(e_i)\in \g$ is constant for $i=1,2,3$.
For this moving frame we thus obtain that $\mbox{span}\{\omega(e_1),\omega(e_2),\omega(e_3)\}\subset \g$ is a subalgebra and all the angles $\theta,\beta,\varphi$ and all the functions $l_{ij}$ are constant. 
If the projection of $TL$ onto one of the distributions $\underline{\m}_1$, $\underline{\m}_2$ or $\underline{\m}_3$ is not surjective, then it easily follows from the formulas of the second fundamental form in \Cref{sec:sff dim = 2} that the Lagrangian is totally geodesic. Consequently, the Lagrangian is classified by \Cref{thm:totally geodesic lag.}.
If $\sin(\beta)\cos(\beta) = 0$, then the projection of $TL$ onto $\underline{\m}_3$ is $1$-dimensional. If $\sin(\varphi)=0$ or $\cos(\varphi) = 0$, then the projection onto $\m_1$ or $\m_2$, respectively, is $1$-dimensional. Thus from now on we assume  $\sin(\beta)\cos(\beta)\sin(\varphi)\cos(\varphi) \neq 0$. 

First consider the case $\sin(\theta)\cos(\theta) \neq 0$.
The formulas in \Cref{sec:second fund form} of the second fundamental form immediately imply $h_{12}^3 = -e_1(\beta) = 0$ and thus we obtain 
\begin{align*}
0 &= h_{23}^1 = 2\sqrt{3}l_{22}\cos(\theta)\cos(\beta)\sin(\varphi)\cos(\varphi),\\
0 &= h_{31}^2 = -2\sqrt{3}l_{23}\sin(\theta)\sin(\beta)\sin(\varphi)\cos(\varphi).
\end{align*}
This implies $l_{22}=l_{23}=0$. Thus we have $h^2 = l_{21}e^1$ and $\sd h^2 = l_{21}\sd e^1$. By contracting both sides of $\sd h^2 = l_{21}\sd e^1$ with $e_1\wedge e_2$ it follows that
\begin{align*}
l_{21} \sd e^1(e_1\wedge e_2) &= -l_{21} e^1([\omega(e_1),\omega(e_2)]) = g([\omega(e_1),\omega(e_2)],-\sin(\theta)v_1 + \cos(\theta)v_2) \\
 &= -2\sqrt{3} (l_{21})^2 \cos(\theta)\sin(\beta)\sin(\varphi)\cos(\varphi)
\end{align*}
is equal to
\[
\sd h^2 (e_1\wedge e_2) = \sqrt{3}(m^1\wedge m^4 - m^2\wedge m^5)(e_1\wedge e_2) = 2\sqrt{3}\cos(\theta)\sin(\beta)\sin(\varphi)\cos(\varphi).
\]
This implies $\cos(\theta)= 0$, which contradicts the assumption $\sin(\theta)\cos(\theta)\neq 0$.

Consider the case $\sin(\theta)\cos(\theta) = 0$. For all four possibilities $\theta = 0$, $\theta=\frac{\pi}{2}$, $\theta=\pi$ and $\theta=\frac{3\pi}{2}$ the Lagrangians $\mbox{span}\{\omega_\m(e_1),\omega_\m(e_2),\omega_\m(e_3)\}$ are all conjugate to each other by either $\exp\left(\frac{\pi}{2}h_1\right)$, $\exp\left(\sqrt{3}\pi h_2\right)$ or a product of both.
Thus without loss of generality we put $\sin(\theta)=1$ and $\cos(\theta)=0$. 
From
\[
-2\sqrt{3} l_{23} \sin(\theta)\sin(\beta)\sin(\varphi)\cos(\varphi) =h_{31}^2 = h_{12}^3 = -e_1(\beta) =0
\]
it follows that $l_{23}=0$. Furthermore, we have $0 = h_{11}^3 = h_{31}^1 = l_{13} =0$. The moving frame of $L$ is thus of the following form
\begin{align*}
\omega(e_1) &= l_{11}h_1 + l_{21}h_2 -v_1,\\
\omega(e_2) &= l_{12}h_1 + l_{22}h_2 + \sin(\beta)v_1' + \cos(\beta)m_6,\\
\omega(e_3) &= \cos(\beta)v_2' - \sin(\beta)m_3.
\end{align*}
Also the following equations hold
\begin{align}\label{eq:thm homogeneous}
 0=h_{33}^2 &= l_{12}(\cos(\beta)^2-2\sin(\beta)^2) - \sqrt{3}l_{22}\cos(\beta)^2(\sin(\varphi)^2 - \cos(\varphi)^2),\\
 0=h_{33}^1 &= l_{11}(\cos(\beta)^2-2\sin(\beta)^2) - \sqrt{3}l_{21}\cos(\beta)^2(\sin(\varphi)^2 - \cos(\varphi)^2).\nonumber
\end{align}
First consider the case $\cos(\beta)^2 - 2\sin(\beta)^2 \neq 0$. Put $c = \sqrt{3}\cos(\beta)^2(\sin(\varphi)^2 - \cos(\varphi)^2 ) / (\cos(\beta)^2 -2\sin(\beta)^2)$, then we have
\[
l_{12} = c\cdot l_{22} \qquad \mathrm{and}\qquad l_{11} = c\cdot l_{21}.
\]
If $c=0$, then $h^1 = 0$ and just as in \Cref{thm:totally geodesic lag.} this implies $0 = \sd h^1+\omega_J = -3m^3\wedge m^6$ or equivalently $\sin(\beta)\cos(\beta)=0$. Thus we can assume $c\neq 0$.
We have $h^1 = l_{11}e^1 + l_{12}e^2$ and $h^2 = l_{21}h^1 + l_{22}h^2$. Taking the exterior differential of these equations and contracting both sides by $e_1\wedge e_3$ yields
\[
0 = \sd h^1(e_1,e_3)  = l_{11} \sd e^1(e_1,e_3) + l_{12}\sd e^2(e_1,e_3) = c(l_{21} \sd e^1(e_1,e_3) + l_{22}\sd e^2(e_1,e_3))
\]
and
\[
-2\sqrt{3}\cos(\beta)\sin(\varphi)\cos(\varphi) = \sd h^2(e_1,e_3) = l_{21}\sd e^1(e_1,e_3) + l_{22} \sd e^2(e_1,e_3).
\]
Consequently, $\cos(\beta)\sin(\varphi)\cos(\varphi) =0$ and the Lagrangian is totally geodesic.

Consider the case $\cos(\beta)^2 - 2\sin(\beta)^2 = 0$. If $\sin(\varphi)^2 -\cos(\varphi)^2 \neq 0$, then from \eqref{eq:thm homogeneous} we get $l_{21} = l_{22} = 0$ and we quickly conclude that $L$ is totally geodesic. Hence, we are left with the case $\sin(\varphi)^2 - \cos(\varphi)^2 = 0$. 
In this case we have $h_{k1}^1 = l_{1k}$, thus for $d= 2\sqrt{3}\sin(\theta)\sin(\beta)\sin(\varphi)\cos(\varphi)$ we find
\begin{align*}
l_{11} &=h_{11}^1 = - h_{22}^1 -h_{33}^1 = 2\sqrt{3}l_{22}\sin(\theta)\sin(\beta)\sin(\varphi)\cos(\varphi) = d\cdot l_{22},\\
l_{12} &=h_{21}^1 = h_{11}^2 = -2\sqrt{3}l_{21}\sin(\theta)\sin(\beta)\sin(\varphi)\cos(\varphi) = -d\cdot l_{21}.
\end{align*}
Consider the case $\sin(\varphi)=\cos(\varphi)=\frac{1}{\sqrt{2}}$, $\sin(\beta)=-\frac{1}{\sqrt{3}}$ and $\cos(\beta) = \frac{\sqrt{2}}{\sqrt{3}}$. A simple computation shows that $\mbox{span}\{\omega(e_1),\omega(e_2),\omega(e_3)\}$ is a subalgebra of $\su(3)$ if and only if $l_{12}=l_{21} =0$ and $l_{11} = \sqrt{2}$. This is the same subalgebra as in \Cref{ex:hom RP3}. All other choices of signs in $\sin(\varphi),\cos(\varphi),\sin(\beta),\cos(\beta)$ yield conjugate solutions. Consequently, the homogeneous Lagrangian submanifold is congruent to $\R P^3\subset F_{1,2}(\C^3)$ from \Cref{ex:hom RP3}.
\end{proof}

\appendix

\section{Canonical connections \label{sec:appendix}}
The canonical connection is used to compute covariant derivatives of the moving frame. Here it is briefly explained how this can be done. The essence is just the expression of a covariant derivative in terms of a principal connection for which the reader can also consult \cite{Kobayashi1963,Kobayashi1969}.
\begin{definition}
	A homogeneous manifold $G/H$ is \emph{reductive} if there exists a complement $\m$ of the Lie algebra $\h$ of the isotropy group $H$ such that $\Ad(H)(\m)\subset \m$. Let $\omega_G$ be the \emph{left invariant Maurer-Cartan form} on $G$. Let $\omega_\h$ and $\omega_\m$ be the $\h$ and $\m$ component of $\omega_G$, respectively. 
\end{definition}
A reductive complement $\m$ can be used to define a principal connection.
\begin{definition}
	The \emph{canonical connection} of a reductive complement $\m$ is the principal connection on the principal $H$-bundle $\pi:G\to G/H$ defined by the horizontal distribution
 	\[
	\mathcal{H}_g := \sd L_g (\m) \subset T_g G,
	\]
	where $L_g:G\to G$ denotes left multiplication by $g\in G$. The corresponding connection $1$-form is given by $\omega_\h:TG \to \h$, i.e. $\mathcal{H}_g = \ker(\omega_\h)_g $.
\end{definition}
The tangent bundle of a homogeneous space is naturally isomorphic to the associated bundle $G\times_H \m$ and there is a natural bijective map between vector fields $X\in \Gamma(TG/H)$ and $H$-equivariant functions $\hat{X}:G\to \m$. For $X\in \Gamma(TG/H)$ let $\tilde{X}\in \Gamma(TG)$ denote a lift, i.e. $\pi_*\tilde{X} = X$. 
For a lift $\tilde{X}$ one has $\tilde{X}_{g h} = \sd R_h \tilde{X}_g$. Thus 
\[
\omega_G(\tilde{X}_{gh}) = \sd L_{gh}^{-1} \tilde{X}_{gh} = \sd L_h^{-1} \sd R_h\sd L_{g}^{-1} \tilde{X}_g = \Ad(h)^{-1} \omega_G(\tilde{X}_g).
\]
The $H$-equivariant function $\hat{X}:G\to \m$ corresponding to $X$ is given by
\[
\hat{X} = \omega_\m(\tilde{X}),
\]
and the right-hand-side is independent of the lift $\tilde{X}$. 

The principal connection $\omega_\h$ defines a covariant derivative on $TG/H$ by
\begin{equation}\label{eq:principal derivative}
\widehat{\nabla_X Y} = \sd_{\tilde{X}} \hat{Y} + \ad(\omega_\h(\tilde{X}))\cdot \hat{Y}: G\to \m.
\end{equation}
\begin{lemma}
	The right-hand-side of \eqref{eq:principal derivative} is independent of the lift $\tilde{X}$ of the vector field $X$.
\end{lemma}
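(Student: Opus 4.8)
The plan is to verify directly that the right-hand side of \eqref{eq:principal derivative} does not change when the lift $\tilde{X}$ of $X$ is replaced by another lift. Two lifts $\tilde{X}_1, \tilde{X}_2 \in \Gamma(TG)$ of the same vector field $X \in \Gamma(TG/H)$ differ by a vertical vector field, i.e.\ their difference $\tilde{X}_1 - \tilde{X}_2$ lies in $\ker \pi_*$ at every point. Since the canonical connection is defined so that the horizontal distribution is $\ker(\omega_\h)$ and the vertical distribution is exactly the kernel of $\omega_\m = \pi_*$ under the identification $TG/H \cong G \times_H \m$, a vertical vector field is precisely a fundamental vector field: there is a smooth function $A : G \to \h$ such that $\tilde{X}_1 - \tilde{X}_2$ equals the fundamental vector field $\sigma(A)$ associated to $A$, whose value at $g$ is $\sd L_g(A(g))$. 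The key point I would first record is therefore $\omega_G(\tilde{X}_1 - \tilde{X}_2) = A$, so that $\omega_\h(\tilde{X}_1) - \omega_\h(\tilde{X}_2) = A$ and, because $A$ is $\h$-valued, $\omega_\m(\tilde{X}_1) = \omega_\m(\tilde{X}_2)$.

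Next I would substitute into \eqref{eq:principal derivative} and compute the difference of the two expressions. Writing $\hat{Y} = \omega_\m(\tilde{Y})$, the difference is
\[
\sd_{\tilde{X}_1 - \tilde{X}_2}\,\hat{Y} + \ad\big(\omega_\h(\tilde{X}_1) - \omega_\h(\tilde{X}_2)\big)\cdot \hat{Y} = \sd_{\sigma(A)}\,\hat{Y} + \ad(A)\cdot \hat{Y}.
\]
So the statement reduces to showing that this combination vanishes identically. The first term is the derivative of the $\m$-valued function $\hat{Y}$ along the fundamental vector field $\sigma(A)$, and the second is the pointwise bracket correction. I expect this to be exactly the infinitesimal form of the $H$-equivariance relation $\hat{Y}(gh) = \Ad(h)^{-1}\hat{Y}(g)$ already derived in the excerpt.

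The main step, then, is to differentiate that equivariance relation. Fixing $g$ and setting $h = \exp(tA(g))$, differentiating $\hat{Y}(g\exp(tA(g))) = \Ad(\exp(-tA(g)))\,\hat{Y}(g)$ at $t = 0$ gives on the left the derivative of $\hat{Y}$ along the right-invariant direction generated by $A(g)$ — which is precisely $\sd_{\sigma(A)}\hat{Y}$ evaluated at $g$ — and on the right $-\ad(A(g))\cdot \hat{Y}(g)$. Hence $\sd_{\sigma(A)}\hat{Y} = -\ad(A)\cdot \hat{Y}$, and the displayed difference is zero, which is exactly what we want. I would be careful here with the direction of the fundamental vector field and the resulting sign: $\sigma(A)$ is tangent to the orbit $t \mapsto g\exp(tA(g))$, so the derivative it computes matches the left-hand side of the differentiated equivariance identity, and the signs cancel correctly. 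The only genuine subtlety, and the place I would slow down, is justifying that the difference of two lifts is a fundamental vector field with a \emph{smooth} generating function $A : G \to \h$ and that the na\"ive pointwise computation with $\exp(tA(g))$ legitimately produces $\sd_{\sigma(A)}\hat{Y}$; once that identification is in place the rest is a one-line cancellation.
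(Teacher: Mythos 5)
Your argument is correct: the paper states this lemma without proof, but the equivariance relation $\omega_G(\tilde{X}_{gh})=\Ad(h)^{-1}\omega_G(\tilde{X}_g)$ derived immediately before it is exactly the identity you differentiate, so your proof is the intended one. The two points you flag as subtleties are both fine — $A=\omega_\h(\tilde{X}_1-\tilde{X}_2)$ is automatically smooth, and the directional derivative $\sd_{V_g}\hat{Y}$ depends only on the tangent vector $V_g=\sd L_g(A(g))$, so computing it along the curve $t\mapsto g\exp(tA(g))$ is legitimate even though $A$ is not constant.
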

Suppose we are given a submanifold $\Sigma\subset G/H$ which is defined by some immersion $\iota:\Sigma\to G/H$. Let $\tilde{\iota}:\Sigma\to G$ be some local lift, i.e. a local moving frame. 
We now use \eqref{eq:principal derivative} to express the covariant derivative with respect to the canonical connection of a vector field $Y\in \Gamma(T(G/H)|_\Sigma)$ by a tangent vector $X_p \in T_p\Sigma$ in a local moving frame. We have
\begin{align*}
\nabla_{X_p} Y &= \sd_{\tilde{\iota}_*X_p} (\omega_\m(\tilde{\iota}_*Y)) + \ad(\omega_\h(\tilde{\iota}_*X)))\cdot \omega_\m (\tilde{\iota}_*Y)\\
&=\sd_{X_p}\omega_\m(Y) + \ad(\omega_\h(X_p))\cdot \omega_\m(Y), \numberthis \label{eq:moving frame derivative}
\end{align*} 
where the last line is just the notation convention introduced in \Cref{not:pullback forms}, i.e. the pullback $\tilde{\iota}^*\alpha$ of a form by a moving frame is simply denoted as $\alpha$.
This formula gives us a way to express covariant derivatives of the canonical connection in terms of the data describing a moving frame.
\begin{acknowledgements}
	The author is supported by the project 3E160361 of the KU Leuven Research Fund.
\end{acknowledgements}

\bibliographystyle{./abbrv_url}
\bibliography{../../../lagrangian.bib} 

\end{document}